\documentclass[reqno,11pt,a4paper]{amsart}

\usepackage[T1]{fontenc}
\usepackage{lmodern,microtype}
\usepackage[margin=28mm,headheight=14pt]{geometry}
\usepackage{amsmath,amssymb,mathtools,mathrsfs}
\usepackage{enumitem,xurl,aliascnt,needspace}
\usepackage[colorlinks=true,linkcolor=blue,citecolor=blue,urlcolor=blue]{hyperref}
\allowdisplaybreaks[2]
\numberwithin{equation}{section}

\newtheorem{theorem}{Theorem}[section]
\newaliascnt{proposition}{theorem}
\newtheorem{proposition}[proposition]{Proposition}
\aliascntresetthe{proposition}
\newaliascnt{lemma}{theorem}
\newtheorem{lemma}[lemma]{Lemma}
\aliascntresetthe{lemma}
\newaliascnt{corollary}{theorem}
\newtheorem{corollary}[corollary]{Corollary}
\aliascntresetthe{corollary}
\theoremstyle{definition}
\newaliascnt{definition}{theorem}

\aliascntresetthe{definition}
\theoremstyle{remark}
\newaliascnt{remark}{theorem}
\newtheorem{remark}[remark]{Remark}
\aliascntresetthe{remark}
\theoremstyle{plain}

\newcommand{\E}{\mathbb E}
\newcommand{\HS}{\mathrm{HS}}
\newcommand{\op}{\mathrm{op}}
\newcommand{\1}{\mathbf1}
\newcommand{\R}{\mathbb R}
\newcommand{\N}{\mathbb N}
\newcommand{\Z}{\mathbb Z}

\newcommand{\cB}{\mathcal B}
\newcommand{\dd}{\mathrm d}
\newcommand{\tr}{\operatorname{tr}}
\newcommand{\supp}{\operatorname{supp}}

\newcommand{\one}{\mathbf1}
\newcommand{\norm}[1]{\left\lVert#1\right\rVert}
\newcommand{\eqtag}[1]{\refstepcounter{equation}\label{#1}\eqno\textup{(\theequation)}}
\newcommand{\proofstep}[2]{\par\addvspace{0.65\baselineskip}\Needspace{4\baselineskip}\noindent\textbf{Step~#1. #2}\enspace\ignorespaces}

\title[Pointwise bilinear ergodic averages]{Pointwise convergence of bilinear ergodic averages for commuting transformations}

\author[Z. Guo]{Zhuowen Guo}
\author[R. Xiao]{Rongzhong Xiao}
\author[S. Zhang]{Shuhao Zhang}

\address[Zhuowen Guo, Rongzhong Xiao, and Shuhao Zhang]
{School of Mathematical Sciences, University of Science and
Technology of China, Hefei, Anhui 230026, P.R. China}

\email[Zhuowen Guo]{guozw0920@mail.ustc.edu.cn}
\email[Rongzhong Xiao]{xiaorz@mail.ustc.edu.cn}
\email[Shuhao Zhang]{yichen12@mail.ustc.edu.cn}

\date{}

\subjclass[2020]{Primary 37A30; Secondary 42B20, 42B25}

\keywords{Pointwise ergodic theorem, commuting transformations, bilinear averages, variation inequalities, triangular Hilbert transform}

\hypersetup{
  pdftitle={Pointwise convergence of bilinear ergodic averages for commuting transformations},
  pdfauthor={Zhuowen Guo, Rongzhong Xiao, Shuhao Zhang}
}

\begin{document}
\begin{abstract}
Let $(X,\cB,\mu)$ be a probability space and let $S,Q:X\to X$ be commuting probability-preserving transformations. Let $1\leq p,q\leq\infty$ satisfy $1/p+1/q\leq1$. We prove that, for every fixed $F\in L^p(\mu)$ and $G\in L^q(\mu)$, the averages
$$
\frac1N\sum_{n=1}^N F(S^nx)G(Q^nx)
$$
converge almost everywhere as $N\to\infty$.

Our proof introduces a new analytic approach that relies on neither the circle method nor characteristic-factor theory. Exploiting the triangular structure arising from commuting transformations, we combine matrix-energy estimates with Gaussian heat-flow arguments to establish pointwise variation inequalities for suitable smooth averages. We then use even and odd Gaussian kernels to recover the convergence of the original averages.
\end{abstract}

\maketitle

\section{Introduction}\label{sec:introduction}

\subsection{History and motivations}

Let $(X,\cB,\mu)$ be a probability space, and let $S,Q:X\to X$ be
commuting measure-preserving transformations. For measurable functions
$F,G$, define
$$
 B_N(F,G)(x)=\frac1N\sum_{n=1}^N F(S^nx)G(Q^nx).
 \eqtag{eq:bilinear-averages}
$$
We study the almost-everywhere convergence of the averages in
\eqref{eq:bilinear-averages} as $N\to\infty$ through all positive integers.

For a single observable, the theorems of von Neumann~\cite{vonNeumann}
and Birkhoff~\cite{Birkhoff} establish norm and pointwise convergence of
ordinary ergodic averages. Multiple averages arose from the connection
between recurrence and arithmetic configurations.
Furstenberg~\cite{Furstenberg} gave an ergodic-theoretic proof of
Szemer\'edi's theorem~\cite{Szemeredi} by establishing multiple recurrence
along arithmetic progressions. Furstenberg and
Katznelson~\cite{FurstenbergKatznelson} extended this recurrence principle
to commuting transformations and obtained a multidimensional
Szemer\'edi theorem. These developments led to the convergence problem
for products of functions sampled along several orbits. Recurrence alone
does not determine the behavior of those averages for arbitrary bounded
functions.

The norm convergence problem was resolved in increasing generality.
Conze and Lesigne~\cite{ConzeLesigne} proved $L^1$-convergence for $L^2$-inputs under two commuting invertible transformations,
and hence $L^2$-convergence for bounded inputs.
Bergelson and Leibman~\cite{BergelsonLeibman} extended the bilinear
$L^2$-convergence result for bounded inputs to pairs of invertible
measure-preserving transformations generating a nilpotent group,
and established the corresponding double recurrence theorem.
For arithmetic-progression averages in a single system, Host and Kra~\cite{HostKra} and,
independently, Ziegler~\cite{Ziegler} established norm convergence through
nilpotent characteristic factors. Tao~\cite{Tao} proved $L^2$-convergence for bounded inputs
and any finite family of commuting measure-preserving transformations.
Austin~\cite{AustinNorm} gave an ergodic-theoretic proof using
pleasant extensions and treated commuting $\Z^r$-actions along arbitrary
F\o lner sequences and their translates. Walsh~\cite{Walsh} proved norm
convergence for polynomial averages generated by nilpotent groups of
transformations. Austin's construction of sated
extensions~\cite{AustinPleasant} further developed the
extension method for identifying characteristic factors.

Polynomial sampling introduces another difficulty even for a single
observable. Bourgain~\cite{BourgainPolynomial} proved
almost-everywhere convergence of
$$
 \frac1N\sum_{n=1}^N f(T^{P(n)}x)
$$
for $f\in L^p$, $1<p<\infty$, and a nonconstant $P\in\Z[n]$, where
$T$ is an invertible measure-preserving transformation of a probability
space. His proof combines maximal and oscillation estimates with the
circle method. Mirek, Stein and Trojan~\cite{MST}
proved $L^p$-bounds for pointwise $\rho$-variation of polynomial averages
of discrete Radon type, for $1<p<\infty$ and $\rho>2$, with constants
independent of the polynomial coefficients. Their ergodic estimates
include a single function evaluated under compositions of polynomial
iterates of commuting invertible measure-preserving transformations.
Ionescu, Magyar, Mirek and Szarek~\cite{IMMS} established norm and
almost-everywhere convergence of symmetric single-function polynomial
averages for $L^p$-inputs with $1<p<\infty$, under invertible
measure-preserving transformations generating a nilpotent group of step two.

Independent averaging parameters lead to a different extension.
Bourgain, Mirek, Stein and Wright~\cite{BMSW} proved
norm and almost-everywhere convergence of the rectangular averages
$$
 \frac1{N_1\cdots N_k}
 \sum_{n_1=1}^{N_1}\cdots\sum_{n_k=1}^{N_k}
       f\bigl(T^{P(n_1,\ldots,n_k)}x\bigr)
$$
as $\min_jN_j\to\infty$, for $k\ge2$, $f\in L^p$ with
$1<p<\infty$, and $P\in\Z[n_1,\ldots,n_k]$ satisfying $P(0)=0$.
Here $T$ is invertible and measure-preserving.
Their proof uses multi-parameter maximal and oscillation inequalities.
These results concern one function sampled along polynomial orbits.
In \eqref{eq:bilinear-averages}, two functions are sampled along different
orbits at the same averaging index.

For products of functions, Bourgain's double recurrence
theorem~\cite{Bourgain} gives almost-everywhere convergence of the averages
$$
 \frac1N\sum_{n=1}^N F(T^{an}x)G(T^{bn}x)
$$
for bounded $F,G$ and integer speeds $a,b$, when $T$ is invertible and
measure-preserving.
Strictly ergodic models provide another approach.
For each fixed $d$, Huang, Shao and Ye~\cite{HSYModels}
constructed a topological model of an invertible ergodic standard system
whose arithmetic-progression orbit closure is minimal and uniquely
ergodic under the group generated by
$$
 T\times\cdots\times T
 \quad\text{and}\quad T\times T^2\times\cdots\times T^d.
$$
They also proved almost-everywhere convergence of
$$
 \frac1N\sum_{n=0}^{N-1}\prod_{j=1}^d f_j(T^{jn}x)
$$
for every $d$ and bounded inputs when $T$ is ergodic and measurably distal.
Donoso and Sun obtained the pointwise result for ergodic measurably
distal actions generated by two commuting
transformations~\cite{DonosoSun}, and then for a finite
family of commuting transformations~\cite{DonosoSunMultiple}.
Here ergodicity refers to the generated group action. The product-system
formulation explains a difficulty in general: the average in
\eqref{eq:bilinear-averages} is an ergodic average of $F\otimes G$ under
$S\times Q$, evaluated at $(x,x)$. A full-measure set for $\mu\times\mu$
need not contain almost every point of the diagonal with respect to its
copy of $\mu$.

For multiple polynomial averages, Krause, Mirek and Tao~\cite{KMT}
proved pointwise convergence of the averages
$$
 \frac1N\sum_{n=1}^N F(T^nx)G(T^{P(n)}x)
$$
when $P\in\Z[n]$ has degree at least two and the inputs are bounded.
Kosz, Mirek, Peluse, Wan and Wright~\cite{KMPWW} treated bounded
inputs and commuting invertible transformations with polynomial iterates of
pairwise distinct degrees. The two linear iterates in
\eqref{eq:bilinear-averages} are outside that distinct-degree hypothesis.

\subsection{Main result}

We prove almost-everywhere convergence for every commuting pair. The
integrability range includes the endpoint pairs $(1,\infty)$ and
$(\infty,1)$.

\begin{theorem}\label{thm:main}
Let $(X,\cB,\mu)$ be a probability space and let $S,Q:X\to X$ be
commuting probability-preserving transformations. Let
$1\leq p,q\leq\infty$ satisfy $1/p+1/q\leq1$. For every fixed
$F\in L^p(\mu)$ and $G\in L^q(\mu)$, the averages $$\frac1N\sum_{n=1}^N F(S^nx)G(Q^nx)$$
converge almost everywhere as $N\to\infty$.
\end{theorem}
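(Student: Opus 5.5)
The plan follows the strategy announced in the abstract: prove a quantitative variational estimate for a smoothed version of $B_N$, transfer it to the original averages, and conclude by density.

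\emph{Reductions.} By Hölder's inequality, $\sup_N|B_N(F,G)|\le \sup_N\bigl(\frac1N\sum|F|^p\circ S^n\bigr)^{1/p}\bigl(\frac1N\sum|G|^{q}\circ Q^n\bigr)^{1/q}$ when $1/p+1/q=1$, and the case $1/p+1/q<1$ reduces to this one. Combining this bound with the weak-type maximal inequality of Birkhoff shows that the set of pairs $(F,G)$ for which $B_N(F,G)$ converges a.e.\ is closed in $L^p\times L^q$. This covers the endpoints $(1,\infty)$ and $(\infty,1)$ as well, using the weak $(1,1)$ bound for one factor and the $L^\infty$ bound for the other. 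It therefore suffices to prove convergence for $F,G\in L^\infty$, and by truncation for a dense class. I would realize $S,Q$ as a $\Z^2$-action $T^{(m,k)}=S^mQ^k$. The quantities to control are then pointwise bilinear averages along the direction $n(1,0)+n(0,1)$ in a two-dimensional action. Via Calderón transference, these reduce to estimates on $\Z^2$, or on $\Z\times$ a fibre, for $F(x+n e_1)G(x+n e_2)$. This is the ``triangular'' structure: the simplex $\{(x,x+ne_1,x+ne_2)\}$.

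\emph{Smooth averages and variation.} Replace $\frac1N\sum_{n\le N}$ by a Gaussian-weighted average $A_t(F,G)=\sum_n \phi_t(n)F(S^n x)G(Q^nx)$, with $\phi_t(n)=t^{-1}\phi(n/t)$ and $\phi$ an even or odd Gaussian-type kernel ($e^{-\pi s^2}$, $se^{-\pi s^2}$). The main estimate I would aim for is a variation inequality along lacunary or dyadic $t$ (or $\rho$-variation with $\rho>2$):
\[
\Bigl\|\Bigl(\sum_j |A_{t_{j+1}}-A_{t_j}|^2\Bigr)^{1/2}\Bigr\|_{L^1}\lesssim \|F\|_{L^2}\|G\|_{L^2},
\]
or at least a bound that is $o(J)$ in the number of jumps, for bounded functions. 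The differences $A_{t_{j+1}}-A_{t_j}$ are given by heat-flow derivatives $t\partial_t\phi_t$. I would expand the square as a matrix $\langle D_jD_j^*\rangle$ and estimate its energy, summing over $j$ by orthogonality. Commutativity lets me push one derivative onto $F$ along $S$ and the other onto $G$ along $Q$ through the identity $\partial_n = $ (shift in $S$) + (shift in $Q$). A bilinear Littlewood--Paley argument, in the spirit of triangular Hilbert transform energy estimates, then bounds the square function by $\|F\|_2\|G\|_2$ after a Cauchy--Schwarz in the third variable.

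\emph{Return to $B_N$ and the main obstacle.} From variation along lacunary $t$ and between consecutive lacunary scales (a short-variation piece, controlled by smoothness of Gaussians at comparable scales), we obtain a.e.\ convergence of $A_t$ for each kernel in a family spanned by even and odd Gaussians. The indicator $\1_{[0,1]}$ is approximated in $L^1$ by combinations $\sum c_k\psi_k$ of dilated even and odd Gaussians; the odd ones are needed because the interval is one-sided. The maximal bound controls the error uniformly in $N$, which transfers convergence to $B_N$. The main obstacle is the energy estimate for the square function, since no Fourier or characteristic-factor structure is available. I would first try the heat-semigroup identity $\phi_t=\int \partial_s\phi_s\,ds$ combined with a positivity/Cauchy--Schwarz argument that telescopes the energy into $\|F\|_2^2\|G\|_\infty^2$ boundary terms.
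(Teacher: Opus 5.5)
\textbf{There is a genuine gap.} Your architecture matches the paper's: reduce to bounded inputs, prove a jump or variation estimate for smooth Gaussian-type averages, approximate $\1_{(0,1]}$ in $L^1$ by even plus odd dilates, and transfer. But the analytic core, which is the whole difficulty, is missing.

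The route you sketch is exactly what breaks in the triangular configuration $A_0(i,j)A_1(j,k)A_2(k,i)\psi_t(i+j+k)$. That route is: expand $D_jD_j^*$, sum over scales ``by orthogonality'', move derivatives onto $F$ and $G$ via the Leibniz identity, run a bilinear Littlewood--Paley argument, then apply Cauchy--Schwarz in the third variable. The obstruction is that frequency localization in $i+j+k$ does not localize any of the three arrays in frequency, so there is no almost-orthogonality between scales.

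Your primary target is also of the wrong strength. A uniform $L^2\times L^2\to L^1$ bound for the lacunary square function is, by Khintchine's inequality, an $O(1)$ bound, averaged over signs, for the triangular operators with kernels $\sum_j\epsilon_j(\phi_{t_{j+1}}-\phi_{t_j})$. That is a uniform multi-scale bound of the type whose absence keeps the triangular Hilbert transform open, and it is far more than needed.

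What the paper proves is your fallback: a bound $C\sqrt m$ for $m$ selected increments with bounded inputs (Proposition~\ref{prop:lattice}). This is enough because $\int D_m=o(m)$ suffices (Lemma~\ref{lem:jump}). It rests on a specific monotone quantity, which your last sentence asks for but does not supply. Its ingredients are:
\begin{itemize}
\item the energies $\tr(T_v^{3/2})$ of the Gram matrices $T_v=R_v^*R_v$, with $R_v=[\,W_v\ W_{v-1}^*\,]$, built from the Gaussian-weighted arrays;
\item averaging over centers with variances $\frac78 s$ at one vertex and $\frac98 s$ at the other two;
\item the heat identities, and control of column costs by row costs via pinching (Lemma~\ref{lem:pinching}), which together give $-J'\ge\frac1{32}\sum_w\widetilde V_w$;
\item the cyclic-trace bound of Lemma~\ref{lem:triangle}, which turns row costs into the trilinear form;
\item the rescaling $\delta=m^{-1/2}$ of the switching array, which makes the energy jumps at the $m$ endpoints cost $O(N_0)$ and produces the $\sqrt m$.
\end{itemize}
Without such a mechanism, the proposal is a plan rather than a proof.

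\textbf{Peripheral steps that also need fixing.}
\begin{itemize}
\item \emph{Short variation.} Between lacunary scales, the variation is not controlled by smoothness of the Gaussians. It is bounded by a continuous square function of $t\partial_tA_t$, which is again a triangular form of full difficulty. The paper avoids the long/short split by allowing arbitrary disjoint intervals with left endpoints at least $4$.
\item \emph{Kernel choice.} The scale derivatives of $e^{-\pi s^2}$ and $se^{-\pi s^2}$ involve $\gamma''=g_1'*g_1'*g_1$ and $\gamma'=g_1'*g_1*g_1$. These leave some vertex without a derivative window. The paper's representation of $R_v^*D_vR_v$ by averaged score insertions needs a derivative at every vertex. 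That is why the paper uses $K=\gamma-\frac32\gamma''$ and $H=\gamma'''$, whose increments are integrals of $g_3^{(r)}$ for $r=3,4,5$. The $L^1$ approximation itself is fine for your kernels, since their Mellin transforms are nonvanishing Gamma factors.
\item \emph{Invertibility.} $S$ and $Q$ are not assumed invertible, so your $\Z^2$-action and two-sided averages $A_t$ are undefined. Either pass to a natural extension, or do as in Lemma~\ref{lem:transfer}: transfer on one-sided boxes and absorb the $n\le0$ part of $P_N$ into the $\ell^1$ kernel error before identifying orbit sums.
\end{itemize}

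\textbf{What is sound.} Your reduction to bounded inputs and your recovery of $B_N$ from the smooth kernels are fine. For bounded inputs, the $\ell^1$ kernel error alone suffices for the recovery.
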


The limit in this theorem is generally not constant. For instance,
when $S=Q$ the averages are the ordinary ergodic averages of $FG$.
We give a product-of-integrals conclusion under joint ergodicity
in Corollary~\ref{cor:joint}. This includes an application to products of
weakly mixing systems in which each coordinate is evaluated at
at most two distinct nonzero speeds; see Corollary~\ref{cor:independent}.
The integrability range in Theorem~\ref{thm:main} is sharp for the class
of all commuting pairs, as shown in Proposition~\ref{prop:range}.

\subsection{Proof outline of the main result}

We prove the bounded-input case by controlling selected increments
of the forward averages. The analytic estimate is for finite arrays;
an approximation of the averaging kernel transfers it directly
along nonnegative orbits.

\proofstep{1}{A finite-array estimate.}
For finitely supported real arrays $|A_v|\le1$, put
$N_v=\#\supp A_v$. At each entry of $A_0$, select at most $m$
disjoint scale intervals and coefficients of modulus at most one.
Proposition~\ref{prop:lattice} bounds the resulting triangular form by
$$
 |\Lambda^\psi(A_0,A_1,A_2)|
       \le C\sqrt m\,(N_0+N_1+N_2),
 \qquad \psi=g_3^{(r)},\quad r=3,4,5.
$$
The selected kernels are
$I^\psi_{a,b}=\int_{a^2}^{b^2}\psi_{\sqrt s}\,ds/s$.
A finite sequence of rank-one insertions bounds the cyclic matrix
trace by row costs. Pinching the two column blocks then controls
the column costs by the row costs, so the total row energy has a
negative heat derivative. Multiplying the changing array by
$\delta=m^{-1/2}$ bounds all energy jumps by
$Cm\delta^2N_0=CN_0$. Integration and division by $\delta$
give the estimate.

\proofstep{2}{Two kernels with controlled increments.}
Put $\gamma=g_3$, $K=\gamma-\tfrac32\gamma''$ and $H=\gamma'''$.
Their scale derivatives give
$$
 K_b-K_a=-\frac94I^{\gamma^{(4)}}_{a,b},\qquad
 H_b-H_a=\frac32\bigl(I^{\gamma^{(3)}}_{a,b}
                         +I^{\gamma^{(5)}}_{a,b}\bigr).
$$
Thus every finite linear combination $P$ of dilates of $K,H$
has the same selected-increment estimate, with a constant depending
on $P$ and a fixed lower scale threshold.

\proofstep{3}{Approximate the forward kernel and transfer.}
On the positive half-line, a bounded annihilator of the dilates
of either kernel gives a Gaussian transform satisfying
$aF'(a)+\tfrac32F(a)=0$. Its growth bound forces $F=0$;
polynomial density and Hahn--Banach give density of both dilation
families. The evenness of $K$ and oddness of $H$ then approximate
$b=\1_{(0,1]}$ in $L^1(\R)$.
An explicit sampling estimate gives, for any $\varepsilon>0$,
a profile $P$ as above and an integer $M$ such that
$$
 \sum_{n\in\Z}|b_N(n)-P_N(n)|\le\varepsilon
                         \qquad(N\ge M).
$$
Transfer the finite-array estimate on positive orbit boxes,
absorbing the kernel error before identifying the orbit sums.
Lemma~\ref{lem:transfer} gives
$$
 \int D_m^M(B(F,G))\,d\mu\le C_P\sqrt m+4m\varepsilon
$$
for normalized bounded real inputs. Here $D_m^M$ is the largest
sum of absolute increments over at most $m$ ordered disjoint
integer intervals with endpoints at least $M$.

\proofstep{4}{The Cauchy criterion and general inputs.}
The finite initial range contributes at most $2M$ to $D_m=D_m^1$.
First let $m\to\infty$ with $P,M,\varepsilon$ fixed, and then
let $\varepsilon\downarrow0$. This gives
$\int D_m(B(F,G))\,d\mu=o(m)$.
Infinitely many increments of any fixed positive size would
contradict this bound, so the forward averages converge almost
everywhere. Real and imaginary parts give bounded complex inputs.
Truncation, H\"older's inequality and Birkhoff's theorem give the
full stated range of $p,q$, including the infinite endpoints.

\subsection*{Structure of the paper}
Section~\ref{sec:preliminaries} fixes the notation and recalls the elementary
background. Section~\ref{sec:matrix-heat} fixes the matrix and Gaussian notation.
Section~\ref{sec:counts} establishes the finite-lattice interval estimate, and
Section~\ref{sec:transference} transfers it to the dynamical setting and proves
the variation bounds. Section~\ref{sec:recovery} recovers the forward averages.
Section~\ref{sec:extensions} proves Theorem~\ref{thm:main}, the optimality of its
integrability range, and the applications to joint ergodicity.

\section{Preliminaries}\label{sec:preliminaries}

We write $\N=\{1,2,\ldots\}$ and $\N_0=\N\cup\{0\}$.
Lebesgue measure is used on Euclidean spaces and counting measure
on $\Z$ and $\Z^2$. For a finite set $E$, $\#E$ is its cardinality;
$\one_E$ is its indicator. The support of an array is the set of
its nonzero entries. Integrals of nonnegative functions are understood
in the extended sense until their finiteness is established.
The letter $C$ denotes a positive absolute constant that may change
from line to line. Unless explicitly stated, it is independent of
the finite index sets, arrays, selected endpoints and transformations.

Function spaces are complex. We prove the analytic estimate and
the bounded dynamical case for real inputs, and then split complex
inputs into real and imaginary parts. All matrices in the analytic
argument act between finite-dimensional real Euclidean spaces, and
$R^*$ denotes transpose. We allow different sizes at the three vertices.
The operator norm is the Euclidean operator norm, and
$$
 \langle A,B\rangle_{\HS}=\tr(A^*B),\qquad
 \|A\|_{\HS}^2=\sum_{i,j}|A_{ij}|^2.
$$
Horizontal concatenation is written $[A\ B]$.
The row and column norms are Euclidean norms. For symmetric matrices,
$P\ge0$ means positive semidefinite and $P\le Q$ means $Q-P\ge0$.
Positive fractional powers are defined by orthogonal spectral calculus;
in particular $P^{3/2}=P\sqrt P$. The matrix support
$\supp P=(\ker P)^\perp$ is a subspace, whereas the support of an
array is a set of entries. A matrix $H$ supported on $P$ satisfies
$H=\pi_P H\pi_P$, where $\pi_P$ is the orthogonal projection onto
$\supp P$. These conventions include the zero-dimensional support.

\subsection{Measure-preserving systems}

A measurable map $T:X\to X$ on a probability space $(X,\cB,\mu)$ is
measure-preserving if $\mu(T^{-1}E)=\mu(E)$ for every $E\in\cB$.
When $T$ is invertible, its inverse is also assumed measurable.
Negative powers of a transformation are used only in this case.
Statements about $L^p$ functions are understood up to null sets, and
every pointwise conclusion concerns a fixed choice of the input functions.
We refer to \cite[Chapter~1]{Walters} for the ergodic-theoretic background.

For a sub-sigma-algebra $\mathcal A\subseteq\cB$ and $f\in L^1(\mu)$,
the conditional expectation $\mathbb E_\mu(f\mid\mathcal A)$ is the
$\mathcal A$-measurable integrable function characterized, up to a null
set, by
$$
 \int_E\mathbb E_\mu(f\mid\mathcal A)\,\dd\mu
       =\int_E f\,\dd\mu\qquad(E\in\mathcal A).
$$
Conditional expectation is positive and commutes with monotone limits
whenever the limiting function is integrable.
In particular, if $0\le h_m\le h_1\in L^1(\mu)$ and $h_m\downarrow0$
almost everywhere, then
$$
 \mathbb E_\mu(h_m\mid\mathcal A)\downarrow0
       \quad\text{almost everywhere}.
 \eqtag{eq:conditional-tails}
$$
See \cite[Chapter~9]{Williams}; the decreasing form follows by applying
conditional monotone convergence to $h_1-h_m$.
The invariant sigma-algebra of $T$ is
$$
 \mathcal I_T=\{E\in\cB:\mu(T^{-1}E\mathbin{\triangle}E)=0\},
$$
where $\triangle$ denotes symmetric difference.

\begin{theorem}[Birkhoff and von Neumann]\label{thm:ergodic-prelim}
Let $T$ be a measure-preserving transformation of a probability space.
For every $f\in L^1(\mu)$,
$$
 \frac1N\sum_{n=1}^N f(T^nx)
       \longrightarrow\mathbb E_\mu(f\mid\mathcal I_T)(x)
       \quad\text{for almost every }x.
$$
For $f\in L^2(\mu)$, the convergence also holds in $L^2(\mu)$.
\end{theorem}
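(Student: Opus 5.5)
The statement immediately above is the classical Birkhoff--von Neumann theorem, recalled as background, and the plan is to follow the standard route: first prove $L^2$ convergence through the splitting of $L^2(\mu)$ into invariant and coboundary parts, then derive pointwise convergence on $L^1$ from a maximal inequality and a dense class. Let $U f=f\circ T$, an isometry of $L^2(\mu)$; it need not be unitary, since $T$ is not assumed invertible. The first step is the orthogonal decomposition
$$
 L^2(\mu)=\ker(U-I)\oplus\overline{\{h-Uh:\ h\in L^2(\mu)\}}.
$$
This follows because, for an isometry, $Uf=f$ if and only if $U^*f=f$. Indeed, $\|U^*f-f\|^2=\|U^*f\|^2-2\|f\|^2+\|f\|^2\le0$ when $Uf=f$, using $\langle U^*f,f\rangle=\langle f,Uf\rangle$ and $\|U^*\|\le1$; the converse is symmetric. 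Hence the orthogonal complement of the coboundaries is exactly $\ker(U-I)$.

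On $\ker(U-I)$ the averages are constant. On a coboundary $h-Uh$ the average telescopes to $\frac1N(Uh-U^{N+1}h)$, which has $L^2$ norm at most $2\|h\|_2/N$. Since the averaging operators have norm at most one, the coboundary statement passes to the closure. So $\frac1N\sum_{n=1}^N U^nf$ converges in $L^2$ to the orthogonal projection $Pf$ onto $\ker(U-I)$. It remains to identify $Pf$ with $\mathbb E_\mu(f\mid\mathcal I_T)$. The range of $P$ consists of $T$-invariant functions, which are $\mathcal I_T$-measurable modulo null sets. Moreover $f-Pf$ is orthogonal to every $\one_E$ with $E\in\mathcal I_T$, because $\one_E\in\ker(U-I)$. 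This is exactly the defining property of the conditional expectation.

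For almost everywhere convergence, the key step is the maximal ergodic inequality
$$
 \mu\Bigl(\sup_N \Bigl|\frac1N\sum_{n=1}^N f\circ T^n\Bigr|>\lambda\Bigr)\le \frac{\|f\|_1}{\lambda}.
$$
I would prove it via Garsia's argument. Put $M_Nf=\max_{0\le k\le N}\sum_{n=1}^k f\circ T^n$, with the empty sum equal to zero. One has $f\circ T\ge M_Nf-(M_Nf)\circ T$ on the set $\{M_Nf>0\}$. Integrating over this set and using invariance of $\mu$ gives $\int_{\{M_Nf>0\}}f\circ T\,\dd\mu\ge0$. Applying this to $|f|-\lambda$ and letting $N\to\infty$ yields the maximal inequality. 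The small index shift relative to the usual form $\sum_{n=0}^{k-1}$ only changes the averages by $f\circ T^{N+1}/N-f\circ T/N$. Birkhoff's theorem for the shifted indexing is therefore equivalent to the standard one, because $f\circ T^N/N\to0$ almost everywhere by Borel--Cantelli: $\sum_N\mu(|f|\circ T^N>\varepsilon N)\le\|f\|_1/\varepsilon$.

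Next comes the dense class. The set $\ker(U-I)\cap L^\infty$ plus the bounded coboundaries $h-Uh$ with $h\in L^\infty$ is dense in $L^1$. This follows from the $L^2$ decomposition above, the density of $L^\infty$ in both summands (truncate and then project), and density of $L^2$ in $L^1$. For such functions, pointwise convergence is immediate: invariant functions have constant averages, and bounded coboundaries have averages of size $O(\|h\|_\infty/N)$. Finally, the oscillation $\limsup_N A_Nf-\liminf_N A_Nf$ is controlled by twice the maximal function of $f-g$ for $g$ in the dense class. The maximal inequality then shows that this oscillation vanishes almost everywhere. The limit agrees with the $L^1$ limit, which is $\mathbb E_\mu(f\mid\mathcal I_T)$: conditional expectation is $L^1$-continuous, and the $L^1$ convergence of the averages follows from the $L^2$ result by the same density argument. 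The main obstacle is the maximal inequality, and in particular handling a non-invertible $T$ in Garsia's argument. That argument uses only $\int h\circ T\,\dd\mu=\int h\,\dd\mu$, so no inverse is required.
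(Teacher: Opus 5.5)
The paper does not prove this statement. It recalls it as classical background and refers to Birkhoff, von Neumann and Walters, Section~1.6. Your argument is a correct, self-contained version of the standard proof. As you observe, nothing in it needs invertibility: the isometry splitting and Garsia's inequality use only $\int h\circ T\,\dd\mu=\int h\,\dd\mu$. Also, the paper's $\mathcal I_T$ (almost invariant sets) is exactly the $\sigma$-algebra for which representatives of $\ker(U-I)$ are measurable.

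There is another common route: derive the almost-everywhere statement directly from the maximal ergodic theorem on the invariant sets $\{\liminf_N A_Nf<\beta<\alpha<\limsup_N A_Nf\}$, then obtain mean convergence by dominated convergence and density of $L^\infty$. You go in the opposite order. The splitting $L^2=\ker(U-I)\oplus\overline{(I-U)L^2}$ comes first, and it does three jobs at once. It gives the $L^2$ theorem. It reduces the identification of the limit with $\mathbb E_\mu(f\mid\mathcal I_T)$ to a two-line orthogonality check. It supplies the dense class for the maximal-inequality (Banach principle) argument.

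Three cosmetic points:
\begin{enumerate}
\item The index-shift difference is $(f\circ T^N-f)/N$, not $(f\circ T^{N+1}-f\circ T)/N$. The remark is unnecessary anyway, since every step you give already uses the indexing $n=1,\dots,N$.
\item Density of the bounded coboundaries comes from approximating $f-Pf$ by $h-Uh$ with $h\in L^2$ and then truncating $h$. Projecting a truncation does not do it, because $f_M-Pf_M$ need not have the form $h-Uh$ with $h$ bounded.
\item For complex $f$, run the oscillation argument on real and imaginary parts, or use $\limsup_{N,N'}|A_Nf-A_{N'}f|$.
\end{enumerate}
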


The two convergence statements are the pointwise and mean ergodic
theorems; see \cite{Birkhoff,vonNeumann} and
\cite[Section~1.6]{Walters}.
A transformation $T$ is ergodic if every set in $\mathcal I_T$ has
measure zero or one. In that case,
$\mathbb E_\mu(f\mid\mathcal I_T)=\int f\,\dd\mu$ almost everywhere.
It is weakly mixing if $T\times T$ is ergodic on
$(X\times X,\mu\times\mu)$. Nonzero powers and finite products of
invertible weakly mixing transformations are weakly mixing; see
\cite[Chapter~1]{Walters}.

For transformations $S,Q$ and functions $F,G$ on $X$, we write
$$
 (S\times Q)(x,y)=(Sx,Qy),\qquad
 (F\otimes G)(x,y)=F(x)G(y).
$$
We use analogous notation for finite products, whereas $SQ$ denotes
composition on $X$. The commuting transformations $S,Q$ are jointly
ergodic if, for all bounded $F,G$, the averages in \eqref{eq:bilinear-averages}
satisfy
$$
 B_N(F,G)\longrightarrow
       \left(\int F\,\dd\mu\right)\left(\int G\,\dd\mu\right)
       \quad\text{in }L^2(\mu).
$$
Corollary~\ref{cor:joint} supplies sufficient conditions for this property.

\section{Finite matrix estimates and Gaussian notation}\label{sec:matrix-heat}

\subsection{Gaussian kernels}
For a function $f$ on $\R$ and $R>0$, we set its dilation by $f_R(t):=R^{-1}f(t/R)$.
 We use
$$
 g_s(t)=(2\pi s)^{-1/2}e^{-t^2/(2s)},\qquad s>0,
 \qquad (f*g)(t)=\int_\R f(t-u)g(u)\,\dd u.
$$
The subscript in $g_s$ also is its variance. Primes denote derivatives in the real
variable. In particular,
$$
 g_s*g_t=g_{s+t},\qquad 2\partial_sg_s=g_s'',\qquad
 (g_1^{(r)})_{\sqrt s}=s^{r/2}g_s^{(r)}.
$$
A Schwartz function is a smooth function whose derivatives decay
faster than every inverse power: $\sup_t(1+|t|)^a|f^{(b)}(t)|<\infty$
for all $a,b\in\N_0$. All Gaussian derivatives have this property.
Their integrability, convolution identities and sampling bounds will
be used below.

\subsection{Matrix energies}

For $P\ge0$, put $\Phi(P)=\tr(P^{3/2})$.
Write $\mathsf L_A(Y)=AY$ and $\mathsf R_A(Y)=YA$.
If a symmetric matrix $H$ vanishes on $\ker P$ on both sides, define
$$
 b_P(H)=\frac32\langle H,
       (\mathsf L_{\sqrt P}+\mathsf R_{\sqrt P})^{-1}H\rangle_{\HS}.
$$
The inverse is taken on matrices acting on $\supp P=(\ker P)^\perp$;
$b_0(0)=0$. Let $B_P$ be the associated symmetric bilinear form.
In an orthonormal eigenbasis of $P$, restricted to its positive eigenvalues,
$$
 b_P(H)=\frac32\sum_{i,j}
                  \frac{|H_{ij}|^2}{\sqrt{p_i}+\sqrt{p_j}}.
$$

\begin{lemma}\label{lem:calculus}
The form $B_P$ is the Hessian of $\Phi$ on each fixed support.
If $P\le Q$ and $H$ is supported on $P$, then $b_Q(H)\le b_P(H)$.
For measurable integrable matrix pairs $(P_\omega,H_\omega)$ on a
probability space $(\Omega,\nu)$, where $P_\omega\ge0$ and each
$H_\omega$ is symmetric and supported on $P_\omega$,
$$
 b_{\int P_\omega\,d\nu}
       \left(\int H_\omega\,d\nu\right)
 \le \int b_{P_\omega}(H_\omega)\,d\nu.
$$
Furthermore, for symmetric $L$,
$$
 b_{R^*R}(R^*LR)\le\frac34\|L\|_{\op}^2\Phi(R^*R).
$$
The map $R\mapsto\Phi(R^*R)$ is continuously differentiable on
the whole matrix space. For a line segment $R(t)=R_-+t\Delta R$,
where $\Delta R=R_+-R_-$,
$$
 |\Phi(R_+^*R_+)-\Phi(R_-^*R_-)|
 \le 3\int_0^1\sum_{i,j}|\Delta R_{ij}|
             \|\operatorname{row}_iR(t)\|_2
             \|\operatorname{col}_jR(t)\|_2\,dt.
$$
\end{lemma}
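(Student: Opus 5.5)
The plan is to work throughout in an eigenbasis of $P$ and use the integral representation
$$
 (\mathsf L_{\sqrt P}+\mathsf R_{\sqrt P})^{-1}H=\int_0^\infty e^{-u\sqrt P}He^{-u\sqrt P}\,du
$$
on $\supp P$. This representation makes both positivity and monotonicity transparent.

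\textbf{Hessian.} I would apply the Daleckii--Krein formula to $f(x)=x^{3/2}$. The second derivative of $\tr f(P)$ in direction $H$ is $\sum_{i,j}f'^{[1]}(p_i,p_j)|H_{ij}|^2$, where $f'^{[1]}$ is the divided difference of $f'(x)=\tfrac32\sqrt x$. That divided difference equals
$$
 \tfrac32\,\frac{\sqrt{p_i}-\sqrt{p_j}}{p_i-p_j}=\frac{3/2}{\sqrt{p_i}+\sqrt{p_j}},
$$
which is exactly $b_P(H)$ on a fixed support.

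\textbf{Monotonicity and joint convexity.} I would derive both from a variational formula. Since $b_P$ is a positive quadratic form with inverse operator $\tfrac32(\mathsf L+\mathsf R)^{-1}$, its dual gives
$$
 b_P(H)=\sup_Y\Bigl(2\langle H,Y\rangle-\tfrac23\langle Y,\sqrt PY+Y\sqrt P\rangle\Bigr).
$$
The supremum runs over symmetric $Y$; for $H$ supported on $P$ one may restrict to $Y$ supported on $P$, and otherwise the components off the support contribute nothing favourable.

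Monotonicity then follows from operator monotonicity of the square root. If $P\le Q$, then $\sqrt P\le\sqrt Q$, so $\langle Y,\sqrt QY+Y\sqrt Q\rangle=2\tr(Y\sqrt QY)\ge2\tr(Y\sqrt PY)$. Hence $b_Q(H)\le b_P(H)$. For $Q$ one takes the supremum over all $Y$, which is still valid because $H$ lies in $\supp Q$.

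Joint convexity comes from the same formula. Each expression $2\langle H,Y\rangle-\tfrac43\tr(Y\sqrt PY)$ is jointly concave in $(P,H)$ for fixed $Y$, because the square root is operator concave. So the sup is not directly convex in this form. The correct route is to fix $Y$ and integrate:
$$
 \int b_{P_\omega}(H_\omega)\ge\int\Bigl(2\langle H_\omega,Y\rangle-\tfrac43\tr(Y\sqrt{P_\omega}Y)\Bigr)\ge2\langle\bar H,Y\rangle-\tfrac43\tr(Y\sqrt{\bar P}Y).
$$
The last inequality uses $\int\sqrt{P_\omega}\le\sqrt{\bar P}$, which is operator concavity in Jensen form. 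Taking the supremum over $Y$ gives the claim. Measurability and integrability are routine; the case of a varying support is handled by the restriction remark above.

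\textbf{Bound for $R^*LR$.} I would use the singular value decomposition $R=U\Sigma V^*$, so that $R^*R=V\Sigma^2V^*$. Then $R^*LR=V\Sigma M\Sigma V^*$ with $M=U^*LU$ restricted to the relevant block, and $\|M\|_{\op}\le\|L\|_{\op}$. This gives
$$
 b=\frac32\sum_{i,j}\frac{\sigma_i^2\sigma_j^2|M_{ij}|^2}{\sigma_i+\sigma_j}\le\frac32\sum_{i,j}\frac{\sigma_i\sigma_j(\sigma_i^2+\sigma_j^2)}{2(\sigma_i+\sigma_j)}\,|M_{ij}|^2,
$$
using the AM--GM-type inequality $\sigma_i\sigma_j\le(\sigma_i^2+\sigma_j^2)/2$. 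I would refine this to $\le\tfrac34\sum_{i,j}(\sigma_i^3+\sigma_j^3)|M_{ij}|^2/2$, which rests on the scalar inequality $2\sigma_i^2\sigma_j^2\le(\sigma_i+\sigma_j)(\sigma_i^3+\sigma_j^3)/2$; this is true since $(a+b)(a^3+b^3)\ge4a^2b^2$ by AM--GM. Finally, $\sum_j|M_{ij}|^2\le\|L\|_{\op}^2$ for each row $i$, and $\sum_i\sigma_i^3=\Phi(R^*R)$, which gives the constant $\tfrac34$.

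\textbf{Differentiability and the row--column bound.} The map $R\mapsto\Phi(R^*R)=\tr|R|^3$ has derivative $D\Phi(R)[E]=3\tr(|R|R^*E)$. To see this, note $\tr|R|^3=\tr((RR^*)^{3/2})$ and $x^{3/2}$ is $C^1$ on $[0,\infty)$, so the trace functional is $C^1$ even across rank changes.

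Along the segment, the fundamental theorem of calculus gives $|\Delta\Phi|\le3\int_0^1|\tr(|R|R^*\Delta R)|\,dt$. The matrix $W=R|R|$ satisfies $W=(RR^*)^{1/2}R=|R^*|R$, so
$$
 W_{ij}=\sum_k|R^*|_{ik}R_{kj}.
$$
The obstacle is bounding $|W_{ij}|$ by $\|\operatorname{row}_iR\|\,\|\operatorname{col}_jR\|$. For this I would write $W=R\,|R|^{1/2}|R|^{1/2}$ and symmetrize as $W=|R^*|^{1/2}R|R|^{1/2}$. Then
$$
 |W_{ij}|\le\|\,|R^*|^{1/2}e_i\|\;\|R\|_{\op}\;\|\,|R|^{1/2}e_j\|,
$$
where $\|\,|R^*|^{1/2}e_i\|^2=(|R^*|)_{ii}$. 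This is not quite the required form; I would instead try the bound $|W_{ij}|\le\langle|R^*|e_i,e_i\rangle^{1/2}\cdots$. The cleanest attempt is to use
$$
 |W_{ij}|^2\le(RR^*)_{ii}(R^*R)_{jj}=\|\operatorname{row}_iR\|^2\,\|\operatorname{col}_jR\|^2,
$$
which holds if $|W_{ij}|\le\|W^*e_i\|\ldots$. I expect this entrywise estimate to be the main difficulty. My first attempt would be Cauchy--Schwarz applied to $W=|R^*|^{1/2}\cdot(|R^*|^{1/2}R)$ together with the identity $\||R^*|^{1/2}R e_j\|^2=\langle R^*|R^*|Re_j,e_j\rangle$, followed by an interpolation argument.
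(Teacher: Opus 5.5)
The genuine gap is the last assertion. You reduce it correctly, via the gradient $3R|R|$ of $R\mapsto\Phi(R^*R)$ and the segment identity, to the entrywise bound $|(R|R|)_{ij}|\le\|\operatorname{row}_iR\|_2\|\operatorname{col}_jR\|_2$. You then leave this bound open.

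The factorization $|R^*|^{1/2}R|R|^{1/2}$ that you try only loses information. It yields $\|R\|_{\op}(|R^*|_{ii}|R|_{jj})^{1/2}$, which is always at least the target, since $\|\operatorname{col}_jR\|_2^2=\langle|R|^2e_j,e_j\rangle\le\|R\|_{\op}|R|_{jj}$ and likewise $\|\operatorname{row}_iR\|_2^2\le\|R\|_{\op}|R^*|_{ii}$.

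The missing observation is simply not to split $|R|$. Cauchy--Schwarz applied to $(R|R|)_{ij}=\sum_kR_{ik}|R|_{kj}$ gives $\|\operatorname{row}_iR\|_2\,\|\operatorname{col}_j|R|\|_2$. Because $|R|$ is symmetric with $|R|^2=R^*R$, the $j$-th column of $|R|$ has squared norm $(R^*R)_{jj}=\|\operatorname{col}_jR\|_2^2$. Your own expression $W_{ij}=\sum_k|R^*|_{ik}R_{kj}$ works equally well, since the $i$-th row of $|R^*|$ has norm $((RR^*)_{ii})^{1/2}=\|\operatorname{row}_iR\|_2$. Apply this at $R=R(t)$ in $\bigl|\langle 3R|R|,\Delta R\rangle_{\HS}\bigr|\le3\sum_{i,j}|(R|R|)_{ij}|\,|\Delta R_{ij}|$ and integrate in $t$. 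This is exactly the paper's argument, and no interpolation is needed.

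Everything before that is correct and close to the paper:
Daleckii--Krein replaces differentiating $A^2=P$. The dual formula, operator monotonicity of the square root, and the Jensen bound $\int\sqrt{P_\omega}\,d\nu\le\sqrt{\bar P}$ with $\bar P=\int P_\omega\,d\nu$ are the paper's route; it derives the last from $(\int\sqrt{P_\omega}\,d\nu)^2\le\bar P$. The singular-value computation with $4x^2y^2\le(x+y)(x^3+y^3)$ is the same. Invoking the $C^1$ property of $X\mapsto\tr f(X)$ for $f(x)=\max(x,0)^{3/2}$ is a legitimate substitute for the paper's $\varepsilon$-regularization.

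Two small corrections are needed. First, for fixed $Y$ the expression $2\langle H,Y\rangle-\frac43\tr(Y\sqrt PY)$ is jointly convex, not concave, in $(P,H)$, because $-\sqrt{\cdot}$ is operator convex. That is precisely why your ``correct route'' works. Second, before the final supremum can be identified with $b_{\bar P}(\bar H)$, you must check that $\bar H=\int H_\omega\,d\nu$ is supported on $\bar P$. If $\bar Pz=0$, then $\int\langle P_\omega z,z\rangle\,d\nu=0$, so $P_\omega z=0$ and hence $H_\omega z=0$ for almost every $\omega$, giving $\bar Hz=0$.
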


\begin{proof}
On a fixed support, square root is smooth by the implicit function
theorem: at $A>0$, the derivative $X\mapsto AX+XA$ of $A\mapsto A^2$
is invertible. Write $A=\sqrt P$. Differentiating $A^2=P$ gives
$AX+XA=H$ for $X=d(\sqrt P)[H]$. Hence
$d\Phi_P(H)=\frac32\tr(\sqrt P H)$, and another derivative gives
the displayed Hessian.

Square root preserves order. Indeed, inversion reverses order and
$$
 \sqrt P=\frac1\pi\int_0^\infty
             P(P+tI)^{-1}\,\frac{dt}{\sqrt t}.
$$
The order assertion for $b$ now follows from
$$
 b_P(H)=\frac32\sup_{Y=Y^*}
       \{2\tr(HY)-2\tr(\sqrt P\,Y^2)\}.
$$
This formula is obtained by completing the square. On a larger space
the same supremum gives the supported cost; directions inside the
kernel do not contribute. Thus it also proves the stated comparison
when the supports differ.

For averaging, first put $\bar P=\int P_\omega\,d\nu$ and
$\bar H=\int H_\omega\,d\nu$. If $z\in\ker\bar P$, then
$\int\langle P_\omega z,z\rangle\,d\nu=0$. Positivity gives
$P_\omega z=0$ and therefore $H_\omega z=0$ almost everywhere.
Applying this to a basis of $\ker\bar P$ shows that $\bar H$
is supported on $\bar P$; no common support is required.
Set $A=\int\sqrt{P_\omega}\,d\nu$.
Vector-valued Cauchy--Schwarz gives $A^2\le\bar P$,
so $\sqrt{\bar P}\ge A$.
Insert this inequality into the full-space supremum formula and
take the supremum after the integral. This proves the assertion.
The right side may be infinite.

For the inserted bound, use the singular values $\rho_i$ of $R$.
The cost is
$$
 \frac32\sum_{i,j}
      \frac{\rho_i^2\rho_j^2}{\rho_i+\rho_j}|L_{ij}|^2.
$$
Here $L$ is compressed to the left singular vectors of $R$.
The inequality
$x^2y^2/(x+y)\le(x^3+y^3)/4$ and the row and column bounds for $L$
give the claim.

Finally, put $\Psi(R)=\Phi(R^*R)$ and, for $\varepsilon>0$, put
$\Psi_\varepsilon(R)=\tr((R^*R+\varepsilon I)^{3/2})$.
Positive definite calculus gives
$$
 \nabla\Psi_\varepsilon(R)=3R\sqrt{R^*R+\varepsilon I}.
$$
If $R$ has $n$ columns, scalar spectral calculus gives
$$
 \begin{aligned}
 0\le\Psi_\varepsilon(R)-\Psi(R)
   &\le\frac32n\varepsilon
                    (\|R\|_{\op}^2+\varepsilon)^{1/2},\\
 \|\nabla\Psi_\varepsilon(R)-3R|R|\|_{\HS}
   &\le3\sqrt\varepsilon\,\|R\|_{\HS},
 \qquad |R|=\sqrt{R^*R}.
 \end{aligned}
$$
Thus the functions and their gradients converge uniformly on bounded
sets. Integrating the gradients along any segment and passing to the
limit yields
$$
 \Psi(R+L)-\Psi(R)
       =3\int_0^1\langle (R+tL)|R+tL|,L\rangle_{\HS}\,dt.
$$
The limiting gradient is continuous, so this identity proves that
$\Psi$ is $C^1$ on the whole matrix space, with
$\nabla\Psi(R)=3R|R|$, including at changes of rank.
For each entry, Cauchy--Schwarz gives
$$
 |(R|R|)_{ij}|
      \le\|\operatorname{row}_iR\|_2
           \|\operatorname{col}_jR\|_2,
$$
because the $j$-th columns of $|R|$ and $R$ have the same norm.
Apply this bound to the segment identity with $R=R_-$ and
$L=\Delta R$ to obtain the asserted energy-change estimate.
\end{proof}

\begin{lemma}[Pinching the two column blocks]\label{lem:pinching}
If $R=[\,A\quad B\,]$ and $N$ is symmetric on its row space, then
$$
 b_{A^*A}(A^*NA)+b_{B^*B}(B^*NB)
          \le b_{R^*R}(R^*NR).
$$
\end{lemma}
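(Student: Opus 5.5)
The plan is to realize the passage from $R^*R$ to its two diagonal blocks as an average of two orthogonal conjugations, and then invoke the averaging inequality of Lemma~\ref{lem:calculus}.

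\textbf{Block form.} Write
$$
 P=R^*R=\begin{pmatrix}A^*A&A^*B\\ B^*A&B^*B\end{pmatrix},\qquad
 H=R^*NR=\begin{pmatrix}A^*NA&A^*NB\\ B^*NA&B^*NB\end{pmatrix}.
$$
First I check that $H$ is symmetric and supported on $P$. Since $\ker P=\ker R$, we have $Hz=R^*NRz=0$ for $z\in\ker P$. By symmetry $H$ also vanishes on $\ker P$ from the other side, so $b_P(H)$ is defined.

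\textbf{Pinching as an average.} Let $U=\operatorname{diag}(I,-I)$, which is orthogonal and symmetric, with blocks matching the column split of $R$. The pinching map is
$$
 \mathcal E(X)=\tfrac12(X+UXU),
$$
which keeps the diagonal blocks and kills the off-diagonal ones. Thus
$$
 \mathcal E(P)=\operatorname{diag}(A^*A,B^*B),\qquad
 \mathcal E(H)=\operatorname{diag}(A^*NA,B^*NB).
$$
Take $\Omega$ to be the two-point probability space with uniform weights. Set $(P_1,H_1)=(P,H)$ and $(P_2,H_2)=(UPU,UHU)$. Each $H_\omega$ is symmetric and supported on $P_\omega$, since conjugation by $U$ maps $\ker P$ onto $\ker UPU$. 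The averaging inequality of Lemma~\ref{lem:calculus} then gives
$$
 b_{\mathcal E(P)}(\mathcal E(H))\le\tfrac12\bigl(b_P(H)+b_{UPU}(UHU)\bigr).
$$

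\textbf{Orthogonal invariance.} Next I note that $b_{UPU}(UHU)=b_P(H)$. Indeed, $\sqrt{UPU}=U\sqrt P\,U$, so conjugation by $U$ intertwines $\mathsf L_{\sqrt P}+\mathsf R_{\sqrt P}$ with its counterpart for $UPU$. It also preserves the Hilbert--Schmidt inner product. Equivalently, in the eigenbasis formula for $b$, the eigenbasis is simply rotated by $U$. Hence the right side of the averaged inequality equals $b_P(H)$.

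\textbf{Splitting the block-diagonal cost.} It remains to show that for block-diagonal $P'=\operatorname{diag}(P_1,P_2)$ and $H'=\operatorname{diag}(H_1,H_2)$, with $H_k$ supported on $P_k$,
$$
 b_{P'}(H')=b_{P_1}(H_1)+b_{P_2}(H_2).
$$
Here $P_1=A^*A$, $H_1=A^*NA$, and similarly with $B$; the support condition holds because $\ker A^*A=\ker A$ and likewise for $B$. Choose an orthonormal eigenbasis of $P'$ that is the union of eigenbases of $P_1$ and $P_2$. In this basis the entries of $H'$ between the two blocks vanish. The sum $\frac32\sum_{i,j}|H'_{ij}|^2/(\sqrt{p_i}+\sqrt{p_j})$ therefore splits into the two block sums, which proves the claim.

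Combining the three steps yields the lemma. The only real point is the averaging (joint convexity) inequality, which is already available in Lemma~\ref{lem:calculus}. The remaining work is checking supports, which is routine, so I expect no serious obstacle.
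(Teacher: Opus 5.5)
Your proposal is correct and is essentially the paper's own proof. Conjugation by $U=\operatorname{diag}(I,-I)$ writes the block-diagonal pair as the average of $(R^*R,R^*NR)$ and its $U$-conjugate; the averaging inequality of Lemma~\ref{lem:calculus}, orthogonal invariance of $b$, and additivity over direct sums then finish the argument, and your support checks via $\ker R^*R=\ker R$ and $\ker UPU=U\ker P$ supply details the paper leaves implicit.
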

\begin{proof}
Put $T=R^*R$, $H=R^*NR$ and
$U=\operatorname{diag}(I,-I)$ on the two column spaces. Then
$$
 \frac{T+UTU}{2}
   =\operatorname{diag}(A^*A,B^*B),\qquad
 \frac{H+UHU}{2}
   =\operatorname{diag}(A^*NA,B^*NB).
$$
Lemma~\ref{lem:calculus} applies to these two supported pairs even
if their supports differ. Its averaging inequality, orthogonal
conjugation invariance of $b$, and additivity over direct sums give
the claim.
\end{proof}

Let $H_0,H_1,H_2$ be finite-dimensional real Euclidean spaces.
For compatible matrices $W_v:H_{v+1}\to H_v$, with indices modulo $3$, put
$$
 R_v=[\,W_v\quad W_{v-1}^*\,],\qquad
 T_v=R_v^*R_v,\qquad S_v=R_vR_v^*.
$$
The two Grams have the same nonzero eigenvalues, with multiplicity.

\begin{lemma}\label{lem:triangle}
For symmetric $D_v$, with $D_*=\max_v\|D_v\|_{\op}$,
$$
 |\tr(D_0W_0D_1W_1D_2W_2)|
       \le D_*\sum_{v=0}^2 b_{T_v}(R_v^*D_vR_v).
$$
\end{lemma}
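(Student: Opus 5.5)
The plan is to realise the cyclic trace as a pairing of each $H_v:=R_v^*D_vR_v$ against a suitable symmetric test matrix, and then use the supremum formula for $b_P$ from the proof of Lemma~\ref{lem:calculus}. Replacing $Y$ by $\lambda Y$ there and optimising over $\lambda$ gives, for every symmetric $Y$ on the column space of $R_v$ and every $\lambda>0$,
$$
 |\tr(H_vY)|\le \frac{1}{3\lambda}\,b_{T_v}(H_v)+\lambda\,\tr\bigl(\sqrt{T_v}\,Y^2\bigr).
$$
The right-hand side is linear in $b_{T_v}(H_v)$, as required, provided the second term can be bounded by $D_*$ times energies.

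First I would compute the blocks of $H_v$ with respect to the splitting $H_{v+1}\oplus H_{v-1}$. The diagonal blocks are $W_v^*D_vW_v$ and $W_{v-1}D_vW_{v-1}^*$. The off-diagonal block is $W_{v-1}D_vW_v:H_{v+1}\to H_{v-1}$ together with its transpose.

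Cyclicity of the trace gives, for each $v$,
$$
 \tr(D_0W_0D_1W_1D_2W_2)=\tr\bigl((W_{v-1}D_vW_v)\,D_{v+1}W_{v+1}D_{v-1}\bigr).
$$
So the trilinear form equals $\tfrac12\tr(H_vY_v)$, where $Y_v$ is the off-diagonal symmetric matrix whose block is $(D_{v+1}W_{v+1}D_{v-1})^*$. The diagonal blocks of $H_v$ do not see $Y_v$. Averaging this identity over $v=0,1,2$ writes the trace as $\tfrac16\sum_v\tr(H_vY_v)$, and applying the displayed inequality to each term with a common $\lambda$ reduces the lemma to a comparison
$$
 \sum_v\tr\bigl(\sqrt{T_v}\,Y_v^2\bigr)\le C\,D_*^2\sum_v b_{T_v}(H_v).
$$
Given this comparison, choosing $\lambda\simeq D_*^{-1}$ yields the claim, after tracking constants so that the prefactor is exactly $D_*$.

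The main obstacle is this comparison, because $Y_v$ involves $W_{v+1}$, which is not a block of $R_v$. To handle it I would use the eigenbasis formula $b_P(H)=\tfrac32\sum|H_{ij}|^2/(\sqrt{p_i}+\sqrt{p_j})$ and the equality of the nonzero spectra of $T_v$ and $S_v$. The factor $W_{v+1}$ in $Y_v$ is a block of both $R_{v+1}$ and $R_{v-1}$ (as $W_{v+1}^*$ in the latter). The aim is to bound $\tr(\sqrt{T_v}Y_v^2)$ by the costs of the \emph{diagonal} blocks $W_{v+1}^*D_{v+1}W_{v+1}$ and $W_{v+1}D_{v-1}W_{v+1}^*$ of $H_{v+1}$ and $H_{v-1}$, with $\sqrt{T_v}$ compared to $\sqrt{S_{v\pm1}}$ via operator monotonicity of the square root. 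Each block cost is at most the full cost by Lemma~\ref{lem:pinching}.

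If this direct comparison fails, the fallback is to choose $Y_v$ adapted to $\sqrt{T_v}$: divide by $\sqrt{p_i}+\sqrt{p_j}$ in the eigenbasis, so that Cauchy--Schwarz pairs $b_{T_v}(H_v)^{1/2}$ with $b_{T_{v+1}}(H_{v+1})^{1/2}$. The weighted AM--GM inequality $2xy\le x^2+y^2$ then produces the cyclic sum, and $\|D_{v\pm1}\|_{\op}\le D_*$ supplies the single factor $D_*$.
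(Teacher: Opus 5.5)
Your variational inequality and the identity $\tau:=\tr(D_0W_0D_1W_1D_2W_2)=\tfrac12\tr\bigl((R_v^*D_vR_v)Y_v\bigr)$ are correct. The gap is the comparison $\mathcal Q:=\sum_v\tr(\sqrt{T_v}Y_v^2)\le CD_*^2\,\mathcal E$, with $\mathcal E:=\sum_vb_{T_v}(R_v^*D_vR_v)$. Your whole reduction rests on it, and it is false; so is the proposed route through the diagonal blocks. Take vertex spaces $\R$, $\R^2$ (basis $e_1,e_2$), $\R^2$ (basis $f_1,f_2$), and
\[
 W_0=\epsilon\,e_1^*,\qquad W_1=\Lambda\,e_2f_1^*,\qquad W_2=0,\qquad D_0=0,
\]
with $D_1,D_2$ the coordinate swaps, so $D_*=1$. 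Then:
\begin{itemize}
\item $R_0^*D_0R_0=0$;
\item $R_2^*D_2R_2=\Lambda^2\langle D_2f_1,f_1\rangle\,e_2e_2^*=0$;
\item since $S_1=\operatorname{diag}(\epsilon^2,\Lambda^2)$, the singular-value formula for $b_{R^*R}(R^*DR)$ gives $b_{T_1}(R_1^*D_1R_1)=3\epsilon^2\Lambda^2/(\epsilon+\Lambda)\le3\epsilon^2\Lambda$.
\end{itemize}
The two diagonal blocks you intend to use, $W_1^*D_1W_1$ and $W_1D_2W_1^*$, are both zero. Yet $Y_1=Y_2=0$, $Y_0$ has block $D_1W_1D_2=\Lambda\,e_1f_2^*$, and $\sqrt{T_0}=\epsilon\,e_1e_1^*$. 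Hence $\mathcal Q=\epsilon\Lambda^2$, which exceeds $\mathcal E$ by the factor $(\epsilon+\Lambda)/(3\epsilon)$. The best common-$\lambda$ bound your scheme yields is $\frac{1}{3\sqrt3}(\mathcal E\mathcal Q)^{1/2}$, and this is not $O(D_*\mathcal E)$.

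The mechanism is a mismatch of scale weights. $\tr(\sqrt{T_v}Y_v^2)$ can pair a small singular value with the square of a large one ($\epsilon\Lambda^2$). The energy of an entry of $D$ coupling scales $\epsilon\ll\Lambda$ is only of order $\epsilon^2\Lambda$. Operator monotonicity of the square root and pinching compare like scales with like, so they cannot repair this.

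Your fallback does not say which identity would pair $R_v^*D_vR_v$ with $R_{v+1}^*D_{v+1}R_{v+1}$ (these act on different spaces, $H_{v+1}\oplus H_{v-1}$ and $H_{v-1}\oplus H_v$). Nor does it say why the resulting weights would be admissible, so it does not address this obstruction. What is missing is a device that always places the single factor $D_*$ at the \emph{largest} scale.

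The paper obtains this in three steps.
\begin{itemize}
\item It assembles the symmetric block matrix $M$ with off-diagonal blocks $W_0,W_1,W_2^*$, and sets $D=\operatorname{diag}(D_0,D_1,D_2)$. Since $MP_vM\le M^2$ and $\sum_vMP_vDM=MDM$, order comparison and Cauchy--Schwarz give $\mathcal E\ge\frac13b_{M^2}(MDM)$. Moreover $\tr((DM)^3)=6\tau$.
\item It inserts the eigenvalues of $M$ one at a time, from the smallest modulus to the largest. The derivative at the insertion of the $i$-th eigenvalue is $3m_i\langle Dz,z\rangle$, where $z$ involves only eigenvalues of modulus $q_j\le q_i$. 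The bound $|\langle Dz,z\rangle|\le D_*\|z\|_2^2$ therefore produces off-diagonal weights $q_iq_j^2\le2q_i^2q_j^2/(q_i+q_j)$, which the energy does control.
\item This yields $|\tr((DM)^3)|\le2D_*\,b_{M^2}(MDM)$, and hence the lemma.
\end{itemize}
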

\begin{proof}
The assertion on a zero-dimensional total space is immediate.
Form the real symmetric block matrices
$$
 M=\begin{pmatrix}
 0&W_0&W_2^*\\ W_0^*&0&W_1\\ W_2&W_1^*&0
 \end{pmatrix},\qquad
 D=\operatorname{diag}(D_0,D_1,D_2).
$$
Let $P_v$ be the orthogonal projection onto the $v$-th block.
The matrices $MP_vM$ and $L_v=MP_vDM$ are $T_v$ and its insertion,
extended by zero. Since $P_vD=DP_v$, each $L_v$ is symmetric.
Also $MP_vM\le M^2$ and $\sum_vL_v=MDM$.
Order comparison and Cauchy--Schwarz for the nonnegative quadratic
form $b_{M^2}$ therefore give
$$
 \mathcal E:=\sum_v b_{T_v}(R_v^*D_vR_v)
 \ge\sum_v b_{M^2}(L_v)
 \ge\frac13 b_{M^2}(MDM).
$$

Work now in an orthonormal eigenbasis $e_1,\ldots,e_d$ of $M$,
ordered so that $q_i=|m_i|$ is nonincreasing. The matrix of $D$
in this basis is still symmetric and has norm $D_*$.
Insert the eigenvalues one at a time. For $0\le t\le1$, define
the symmetric matrices and the real-valued functions
$$
 X_i(t)=t m_i e_ie_i^*+\sum_{j>i}m_je_je_j^*,\qquad
 F_i(t)=\tr((DX_i(t))^3).
$$
They satisfy $X_1(1)=M$, $X_i(0)=X_{i+1}(1)$ for $i<d$,
and $X_d(0)=0$. Cyclicity of the trace gives, with
$v_i(t)=X_i(t)De_i$,
$$
 F_i'(t)=3m_i e_i^*DX_i(t)DX_i(t)De_i
        =3m_i\langle Dv_i(t),v_i(t)\rangle.
$$
The coordinates of $v_i(t)$ are zero for $j<i$, equal to
$t m_iD_{ii}$ at $i$, and equal to $m_jD_{ji}$ for $j>i$.
Thus
$$
 \|v_i(t)\|_2^2
       =t^2q_i^2D_{ii}^2+\sum_{j>i}q_j^2D_{ij}^2.
$$
Use $|\langle Dv,v\rangle|\le D_*\|v\|_2^2$ and
$\int_0^1t^2\,dt=1/3$, then telescope the successive insertions. This gives
$$
 \begin{aligned}
 |\tr((DM)^3)|
 &\le\sum_i\int_0^1|F_i'(t)|\,dt\\
 &\le D_*\left[
      \sum_iq_i^3D_{ii}^2
           +3\sum_{i<j}q_iq_j^2D_{ij}^2\right].
 \end{aligned}
$$
The eigenbasis formula for $b$ is
$$
 b_{M^2}(MDM)
   =\frac34\sum_iq_i^3D_{ii}^2
       +3\sum_{i<j}\frac{q_i^2q_j^2}{q_i+q_j}D_{ij}^2.
$$
Every ratio with $q_i+q_j=0$ is defined as zero, since $MDM$
vanishes on $\ker M$. The diagonal coefficients in the preceding
bracket are at most those in $2b_{M^2}(MDM)$. For $i<j$,
$q_i+q_j\le2q_i$, so its off-diagonal coefficients satisfy
$$
 3q_iq_j^2\le\frac{6q_i^2q_j^2}{q_i+q_j}.
$$
Consequently
$$
 |\tr((DM)^3)|\le2D_*b_{M^2}(MDM)\le6D_*\mathcal E.
$$
Zero eigenvalues contribute zero throughout, so no invertibility
assumption is needed.

The zero diagonal blocks of $M$ leave only the two oriented
three-cycles, each with three starting blocks. Cyclicity and transpose
symmetry therefore give
$$
 \begin{aligned}
 \tr((DM)^3)
 &=3\tr(D_0W_0D_1W_1D_2W_2)
       +3\tr(D_0W_2^*D_2W_1^*D_1W_0^*)\\
 &=6\tr(D_0W_0D_1W_1D_2W_2).
 \end{aligned}
$$
Division by $6$ proves the claim.
\end{proof}

\section{The bounded lattice estimate}\label{sec:counts}

For $\psi=g_3^{(r)}$, $r=3,4,5$, and $0<a<b$, define
$$
 I^\psi_{a,b}(t)=\int_{a^2}^{b^2}\psi_{\sqrt s}(t)\,\frac{ds}{s}.
$$

\begin{proposition}\label{prop:lattice}
Let $A_0,A_1,A_2$ be finitely supported real arrays on $\Z^2$,
with $|A_v|\le1$, and put $N_v=\#\supp A_v$.
Let $m\in\N$. At each $(i,j)$ choose at most $m$ disjoint intervals
$(a_\ell,b_\ell)$ with $4\le a_\ell<b_\ell$, and real coefficients
$|\beta_\ell|\le1$. Then
$$
 \left|\sum_{i,j,k}A_0(i,j)A_1(j,k)A_2(k,i)
       \sum_\ell\beta_\ell(i,j)
            I^\psi_{a_\ell(i,j),b_\ell(i,j)}(i+j+k)\right|
 \le C\sqrt m\,(N_0+N_1+N_2).
$$
The constant is independent of all endpoints and supports.
\end{proposition}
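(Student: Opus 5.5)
We follow the outline of Step~1 in the introduction. Let $H_v=\R^{\supp A_v}$ be the coordinate spaces. The plan is to realise the triangular form as a trace $\tr(D_0W_0D_1W_1D_2W_2)$, make the matrices $W_v$ move under a heat flow in a scale parameter, and track the energy
$$
 \mathcal E(s)=\sum_v\Phi(T_v(s)).
$$
The kernel $I^\psi_{a,b}(i+j+k)$ is an integral over $s\in[a^2,b^2]$ of $\psi_{\sqrt s}$ against $ds/s$. For $\psi=g_3^{(r)}$ we have $\psi_{\sqrt s}=s^{r/2}g_{3s}^{(r)}$, and by the semigroup law $g_{3s}=g_s*g_s*g_s$. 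Hence $\psi_{\sqrt s}(i+j+k)$ splits into three one-dimensional Gaussian factors in the variables $i$, $j$, $k$, with the $r$ derivatives distributed among them (for instance one derivative per factor when $r=3$). Up to a discretisation error, this is the contribution of three heat-evolved matrices $W_v(s)$, each with entries built from $A_v$ convolved along the diagonal direction with $g_s$ or its derivatives. The selected intervals and the coefficients $\beta_\ell(i,j)$ then enter as a diagonal-type multiplier $D_0(s)$ on the $A_0$ vertex, with $D_1=D_2=I$. Because the intervals at each $(i,j)$ are disjoint, we get $\|D_0(s)\|_{\op}\le1$ at every $s$.

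The core step is a monotonicity identity. We compute $\frac{d}{ds}\mathcal E(s)$ from the Hessian description in Lemma~\ref{lem:calculus} together with $2\partial_sg_s=g_s''$. The aim is to show that $-\frac{d}{ds}\mathcal E$ dominates $\sum_vb_{T_v}(R_v^*D_vR_v)$, up to the factor $1/s$ matching $ds/s$. The row costs arise directly from the derivative. The column costs, which involve the blocks $W_v$ and $W_{v-1}^*$ of $R_v$, are bounded by the row costs via Lemma~\ref{lem:pinching}. With that in hand, Lemma~\ref{lem:triangle} gives
$$
 |\Lambda|\le\int\frac{ds}{s}\,D_*\sum_v b_{T_v}(R_v^*D_vR_v)\le\int\Bigl(-\frac{d\mathcal E}{ds}\Bigr)ds+\text{(jumps)},
$$
and the integral telescopes to at most $\mathcal E$ at the lower threshold $a_\ell\ge4$. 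By the row/column bound in Lemma~\ref{lem:calculus}, with entries of modulus at most one and the Gaussian sampling bounds, this is $\le C(N_0+N_1+N_2)$.

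The $\sqrt m$ comes from the discontinuities of $D_0(s)$, which jump up to $2m$ times at each entry $(i,j)$. To control them we replace $A_0$ by $\delta A_0$ with $\delta=m^{-1/2}$, so the form scales by $\delta$. Each jump at an entry changes $W_0$ by a rank-one insertion of size $\delta$. By the segment estimate in Lemma~\ref{lem:calculus}, such a jump changes the energy by at most $C\delta^2$ times row and column norms, which are $O(1)$ for unit-size Gaussian profiles. Summing over entries and jumps gives a total of $Cm\delta^2N_0=CN_0$. Integrating and dividing by $\delta$ yields $C\sqrt m\,(N_0+N_1+N_2)$.

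The main obstacle is the second step: an exact, sign-correct heat derivative identity in the discrete lattice setting, where the Gaussian splitting of $\psi_{\sqrt s}(i+j+k)$ holds only approximately. We would first try to define $W_v(s)$ via the continuous convolution in the summed variable, so that $\partial_s$ acts exactly as $\frac12\partial_t^2$. The odd orders $r=3,5$ then place odd derivatives on the factors, and the resulting signs would be absorbed by $D_*$ and the absolute value. Any residual sampling error would be controlled by the Schwartz decay of Gaussian derivatives for $s\ge16$. This produces an extra term bounded by $C(N_0+N_1+N_2)$ per vertex, uniformly in the endpoints.
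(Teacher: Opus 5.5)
Your outline has several of the right ingredients: the energy $\Phi$, pinching to trade column costs for row costs, Lemma~\ref{lem:triangle}, and the $\delta=m^{-1/2}$ rescaling that makes the jump total $Cm\delta^2N_0$. But the core step is asserted rather than derived, and in the form you state it, it cannot hold.

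First, the selected intervals and coefficients depend on the pair $(i,j)$. So they cannot be a diagonal multiplier $D_0$ on the $i$-space, which is what Lemma~\ref{lem:triangle} requires. They have to be absorbed into the array, $A_0^s(i,j)=\delta A_0(i,j)\sum_\ell\beta_\ell(i,j)\one_{(a_\ell^2,b_\ell^2)}(s)$. This is in fact what your jump paragraph tacitly uses, since jumps of a multiplier $D_0$ would not change $W_0$ or $\mathcal E$ at all.

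Second, with $D_1=D_2=I$ the claimed inequality $-s\,\mathcal E'\gtrsim\sum_v b_{T_v}(R_v^*D_vR_v)$ fails. Because $b_T(T)=\tfrac34\Phi(T)$, it would bound $\int_{s_-}^{s_+}\Phi(T_1)\,ds/s$ by the initial energy uniformly in $s_+$. For constant arrays on a large box, a Gaussian-normalized energy stays of order $N$ for $s$ up to the squared box size, so this integral grows logarithmically. A heat derivative does not dissipate arbitrary bounded insertions. It produces only score insertions $V_w=b_{T_w}(R_w^*\mathsf N_wR_w)$, with $\mathsf N_w=\operatorname{diag}((i-p_w)/\sigma_w)$, plus positive mixed terms that must be beaten. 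So every vertex must carry a mean-zero derivative window, and you need a mechanism that converts those kernel insertions into score insertions. Your proposal has neither, and the ``discretisation error'' you flag as the main obstacle is a symptom of this.

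The paper supplies both pieces.

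\emph{Weights and heat identities.} It puts positive weights $w_v(i)=g_{\sigma_v}(i-p_v)$ with continuous centers $p_v$ on the vertex indices and sets $W_v(i,j)=A_v^s(i,j)\sqrt{w_v(i)w_{v+1}(j)}$. The heat identities $2\partial_\sigma w=\partial_p^2w$ are then exact in the parameters, so there is no sampling error.

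\emph{Center averaging.} Averaging over centers with weight $g_{s/4}(p_0+p_1+p_2)$ creates positive mixed terms $Z_v$. These are controlled by $2Z_v\le Y_{v,v+1}+Y_{v,v-1}$ and then by pinching. The variances $\tfrac78s$ at one distinguished vertex and $\tfrac98s$ at the others are chosen so that a net dissipation of all row costs survives.

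\emph{Exact kernel.} The factorizations $g_3^{(r)}=\ell_0*\ell_1*\ell_2$, with $\ell_v\in\{g_1',g_1''\}$, give bounded diagonal multipliers $D_v=f_v^s(i-p_v)/g_{9s/8}(i-p_v)$ at all three vertices. Integrating the trace over $p_0+p_1+p_2=0$ then yields $\psi_{\sqrt s}(i+j+k)$ exactly.

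\emph{The missing comparison.} Narrow one row variance to $\tfrac78s$ and displace its center by $d$, distributed as $g_{s/4}(d)\,\dd d$. Then $R_v^*D_vR_v=\int c_v(d,s)U_v^d\,d\nu_s(d)$ with $|c_v|\le C\sqrt s$. The averaging inequality of Lemma~\ref{lem:calculus} gives $b_{T_v}(R_v^*D_vR_v)\le Cs\int b_{T_v^d}(U_v^d)\,d\nu_s(d)$, and the factor $s$ cancels the measure $ds/s$.

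Without this comparison, or a substitute for it, the output of Lemma~\ref{lem:triangle} is never tied to the energy budget.
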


\begin{proof}
Set $\delta=m^{-1/2}$ and
$$
 A_0^s(i,j)=\delta A_0(i,j)\sum_\ell\beta_\ell(i,j)
          \one_{(a_\ell(i,j)^2,b_\ell(i,j)^2)}(s),
 \qquad A_1^s=A_1,\quad A_2^s=A_2.
$$
Outside $\supp A_0$ use empty lists. Only finitely many endpoints
then matter. Disjointness gives $|A_0^s|\le\delta$ and
$$
 \sum_\tau|\Delta A_0^\tau(i,j)|
       \le2m\delta\,\one_{\supp A_0}(i,j),
 \qquad \Delta A_0^\tau=A_0^{\tau+}-A_0^{\tau-}.
$$
Choose $s_-=2$ and $s_+$ beyond all relevant endpoint squares.
All matrices can be restricted to one finite index set containing
all coordinates of the supports.

\smallskip
\noindent\emph{Gaussian matrices.}
For centers $p_v$ and variances $\sigma_v$, put
$$
 w_v(i)=g_{\sigma_v}(i-p_v),\qquad
 W_v(i,j)=A_v^s(i,j)\sqrt{w_v(i)w_{v+1}(j)},
 \qquad e_v=\Phi(T_v)=\Phi(S_v).
$$
Write $C_{v,v+1}=W_v$ and $C_{v,v-1}=W_{v-1}^*$ for the two
blocks of $R_v$. Define
$$
 \mathsf N_v=\operatorname{diag}_i\frac{i-p_v}{\sigma_v},\qquad
 U_v=R_v^*\mathsf N_vR_v,\qquad V_v=b_{T_v}(U_v),\qquad
 P_{v,w}=C_{v,w}\mathsf N_wC_{v,w}^*.
$$
Also put
$$
 Y_{v,w}=b_{S_v}(P_{v,w}),\qquad
 Z_v=B_{S_v}(P_{v,v+1},P_{v,v-1}).
$$
These insertions are supported on $T_v$ and $S_v$, respectively:
$U_v$ vanishes on $\ker R_v$, and $\ker S_v$ is contained in
$\ker C_{v,w}^*$.

With the arrays fixed, differentiation of a Gaussian density gives
$\partial_pw=Nw$, $\partial_p^2w=(N^2-\sigma^{-1})w$ and
$2\partial_\sigma w=\partial_p^2w$, where $N=(i-p)/\sigma$.
For a row variable, $T_v$ is linear in $w_v$; for a column
variable $w\ne v$, $S_v$ is linear in $w_w$. Their respective
supports are fixed, since all weights are positive.
The Hessian chain rule gives
$$
 \begin{aligned}
 2\partial_{\sigma_v}e_v&=\partial_{p_v}^2e_v-V_v,\\
 2\partial_{\sigma_w}e_v&=\partial_{p_w}^2e_v-Y_{v,w},\\
 \partial_{p_{v+1}}\partial_{p_{v-1}}e_v&=Z_v.
 \end{aligned}
$$
The last identity holds because
$S_v=C_{v,v+1}C_{v,v+1}^*+C_{v,v-1}C_{v,v-1}^*$ and its two
summands depend on different column centers, so its mixed derivative
is zero.

\smallskip
\noindent\emph{One total row energy.}
The column costs are controlled by the row costs at the same
weights. Indeed, $C_{v,w}C_{v,w}^*\le S_v$, so comparison and
Lemma~\ref{lem:pinching}, applied to the two blocks of $R_w$, give
$$
 \begin{aligned}
 \sum_{v\ne w}Y_{v,w}
 &\le\sum_{v\ne w}
       b_{C_{v,w}C_{v,w}^*}
          (C_{v,w}\mathsf N_wC_{v,w}^*)\\
 &\le V_w.
 \end{aligned}
 \eqtag{eq:column-row-cost}
$$
For the second line, use $C_{w,v}=C_{v,w}^*$: the two terms
in the first line are precisely the two diagonal-block costs
obtained by pinching $R_w^*\mathsf N_wR_w$.

Fix a distinguished vertex $v_*$. Set
$$
 \alpha_{v_*}=\frac78,\qquad
 \alpha_w=\frac98\quad(w\ne v_*),\qquad
 \sigma_w=\alpha_ws,
$$
and define
$$
 \langle H\rangle_s=
     \int_{\R^3}g_{s/4}(p_0+p_1+p_2)H(p,s)\,dp,\qquad
 J^{(v_*)}(s)=\left\langle\sum_{v=0}^2e_v\right\rangle_s,\qquad
 \widetilde V_w^{(v_*)}(s)=\langle V_w\rangle_s.
$$
The bracket is not a probability measure on $\R^3$.
Each choice of $v_*$ uses the indicated weights, and
$J^{(v_*)}\ge0$.

We justify the differentiations and integrations by parts.
Lemma~\ref{lem:calculus} makes $R\mapsto\Phi(R^*R)$ continuously
differentiable on the whole matrix space. Hence first derivatives
in all Gaussian parameters, including the scale chain rule, are
valid between change scales. For a second derivative in one center,
we use the fixed-support Gram identified above. When both column
centers of $R_v$ vary with its row weights fixed, each block
$C_{v,w}$ is right-multiplied by an invertible positive diagonal
matrix. Thus its adjoint kernel is unchanged, and
$$
 \ker S_v=\ker C_{v,v+1}^*\cap\ker C_{v,v-1}^*
$$
is fixed as well. The fixed-support Hessian therefore also justifies
the mixed column derivative. These are all the second derivatives
used below.
The insertion bound in
Lemma~\ref{lem:calculus}, its first derivative formula, and
Cauchy--Schwarz for $B_P$ bound all derivatives used below by
$$
 C_A(1+|p_0|+|p_1|+|p_2|)^2e_v
$$
on compact positive scale intervals. Here $C_A$ may depend on the
fixed arrays and the compact scale interval. The first derivative inserts
a score, and the second inserts
$\mathsf N_w^2-\sigma_w^{-1}I$ and a Hessian term.
Also, on such intervals,
$$
 e_v\le C_A\sum_{i,j}|A_v^s(i,j)|^2w_v(i)w_{v+1}(j)
      +C_A\sum_{k,i}|A_{v-1}^s(k,i)|^2w_{v-1}(k)w_v(i).
$$
This follows from $\Phi(R_v^*R_v)\le\|R_v\|_{\op}\|R_v\|_{\HS}^2$
and the Gaussian grid sums. After multiplication by the bracket
weight, each summand is a product of three Gaussians in independent
linear coordinates: its two endpoint centers and $p_0+p_1+p_2$.
Polynomial multiples have an integrable majorant locally uniform in
scale, obtained by increasing the variances to fixed larger values.
The same holds for derivatives of the bracket weight. Thus all
following differentiations under the integral and integrations by
parts are justified. Changes of rank at a change scale are treated
by the line-segment formula.

The pure second center derivatives can all be transferred to the
bracket weight. Their coefficients sum to
$\sum_w\alpha_w=25/8$.
Since $2\partial_sg_{s/4}=\frac14g_{s/4}''$,
differentiating the weight adds $1/4$ to this coefficient.
Write
$$
                    c=\sum_w\alpha_w+\frac14=\frac{27}{8}.
$$
For each $e_v$, transferring one derivative back to each of its
two column centers gives
$$
 \int_{\R^3}g_{s/4}''(p_0+p_1+p_2)e_v\,dp
     =\langle\partial_{p_{v+1}}\partial_{p_{v-1}}e_v\rangle_s
     =\langle Z_v\rangle_s.
$$
The heat identities therefore give
$$
 \begin{aligned}
 2(J^{(v_*)})'
 &=-\sum_w\alpha_w\widetilde V_w^{(v_*)}
      +c\left\langle\sum_vZ_v\right\rangle_s
      -\sum_w\alpha_w
                  \left\langle\sum_{v\ne w}Y_{v,w}\right\rangle_s\\
 &\le-\sum_w\alpha_w\widetilde V_w^{(v_*)}
      +\sum_w\left(\frac c2-\alpha_w\right)
                  \left\langle\sum_{v\ne w}Y_{v,w}\right\rangle_s\\
 &\le\sum_w\left(\frac c2-2\alpha_w\right)
                              \widetilde V_w^{(v_*)}\\
 &=-\frac1{16}\widetilde V_{v_*}^{(v_*)}
       -\frac9{16}\sum_{w\ne v_*}\widetilde V_w^{(v_*)}.
 \end{aligned}
$$
The first inequality uses $2Z_v\le Y_{v,v+1}+Y_{v,v-1}$.
For the second, all coefficients $c/2-\alpha_w$ are positive,
so \eqref{eq:column-row-cost} applies. Consequently
$$
 - (J^{(v_*)})'(s)
       \ge\frac1{32}\sum_w\widetilde V_w^{(v_*)}(s).
 \eqtag{eq:combined-energy}
$$
This inequality is used only between change scales.

\smallskip
\noindent\emph{The initial energy and its changes.}
For $\sigma\ge1$, comparison with the integral gives
$\sum_i g_\sigma(i-p)\le1+\|g_\sigma'\|_1<2$.
All vertex variances $\sigma_v$ are at least one on $[2,s_+]$. Consequently
$\|W_v\|_{\HS}\le2$, $\|R_v\|_{\op}\le2\sqrt2$ and
$e_v\le2\sqrt2\|R_v\|_{\HS}^2$.
For two different vertices, $\langle w_v(i)w_w(j)\rangle_s=1$:
first integrate the missing center, then the two endpoint densities.
Thus $J^{(v_*)}(2)\le C(N_0+N_1+N_2)$.

At a change scale, freeze the Gaussian weights and interpolate
$A_0$ linearly. Every intermediate entry is bounded by $\delta$.
For a fixed $(i,j)$ put
$$
 \begin{aligned}
 X^2&=\sum_t|A_0(i,t)|^2w_1(t),&
 Y^2&=\sum_t|A_0(t,j)|^2w_0(t),\\
 H^2&=\sum_t|A_2(t,i)|^2w_2(t),&
 Q^2&=\sum_t|A_1(j,t)|^2w_2(t).
 \end{aligned}
$$
Here $A_0$ denotes the intermediate array. The grid bound gives
$X,Y\le\sqrt2\delta$ and $H,Q\le\sqrt2$.
In $R_0=[W_0,W_2^*]$, the row and column norms at the changed
entry are bounded by $\sqrt{w_0(i)}(X+H)$ and
$\sqrt{w_1(j)}Y$. In $R_1=[W_1,W_0^*]$, the corresponding
bounds are $\sqrt{w_1(j)}(Y+Q)$ and $\sqrt{w_0(i)}X$.
The increment of the weighted entry is
$\sqrt{w_0(i)w_1(j)}\Delta A_0(i,j)$. The segment bound in
Lemma~\ref{lem:calculus} therefore gives
$$
 |\Delta e_0|+|\Delta e_1|
   \le C\delta(1+\delta)
       \sum_{i,j}w_0(i)w_1(j)|\Delta A_0(i,j)|.
$$
The third row matrix is unchanged. Integrate the weights and
sum the entrywise changes. Since $\delta\le1$, this gives, for
each distinguished vertex,
$$
 \sum_\tau|\Delta J^{(v_*)}(\tau)|
       \le C\delta\sum_{i,j,\tau}|\Delta A_0^\tau(i,j)|
       \le Cm\delta^2N_0.
$$
Integrating \eqref{eq:combined-energy} between changes, including
these jumps and discarding the nonnegative final energy, gives
$$
 \int_2^{s_+}\sum_w\widetilde V_w^{(v_*)}(s)\,ds
 \le32J^{(v_*)}(2)+32\sum_\tau|\Delta J^{(v_*)}(\tau)|
 \le C(N_0+N_1+N_2).
 \eqtag{eq:row-cost-budget}
$$
Here $m\delta^2=1$.

\smallskip
\noindent\emph{Recovering the lattice kernel.}
The three profiles factor as
$$
 g_3'''=g_1'*g_1'*g_1',\qquad
 g_3^{(4)}=g_1''*g_1'*g_1',\qquad
 g_3^{(5)}=g_1''*g_1''*g_1'.
$$
Denote the chosen windows by $\ell_v\in\{g_1',g_1''\}$ and put
$f_v^s=(\ell_v)_{\sqrt s}$.
Temporarily use variance $9s/8$ at every vertex and insert
$$
 D_v=\operatorname{diag}_i
              \frac{f_v^s(i-p_v)}{g_{9s/8}(i-p_v)}.
$$
Their norms are uniformly bounded: each ratio is a fixed polynomial
in $(i-p_v)/\sqrt s$ times a decaying Gaussian.
For a fixed $v$, narrow its row variance to $7s/8$ and displace its
center by $d$. Denote its Gram and score insertion by $T_v^d,U_v^d$,
and put $d\nu_s(d)=g_{s/4}(d)\,dd$. Gaussian convolution gives
$$
 T_v=\int T_v^d\,d\nu_s(d),\qquad
 R_v^*D_vR_v=\int c_v(d,s)U_v^d\,d\nu_s(d),
$$
where
$$
 c_v(d,s)=
 \begin{cases}
 -\sqrt s\,g_{s/8}(d)/g_{s/4}(d),&\ell_v=g_1',\\
 -s\,g_{s/8}'(d)/g_{s/4}(d),&\ell_v=g_1''.
 \end{cases}
$$
Indeed, $g_{7s/8}*g_{s/4}=g_{9s/8}$ gives the Gram identity.
The row density times its score is $-g_{7s/8}'$; convolving it
against $c_vg_{s/4}$ gives $\sqrt s\,g_s'$ or $s g_s''$,
since $7/8+1/8=1$. These are exactly the windows $f_v^s$.
The ratio $g_{s/8}(d)/g_{s/4}(d)$ is
$\sqrt2\exp(-2d^2/s)$, and $g_{s/8}'(d)=-(8d/s)g_{s/8}(d)$.
Thus both coefficients are bounded by $C\sqrt s$.
The Grams have a common support because their row weights are
positive. Lemma~\ref{lem:calculus} gives
$$
 b_{T_v}(R_v^*D_vR_v)
        \le Cs\int b_{T_v^d}(U_v^d)\,d\nu_s(d).
$$

Let $\Pi_d=\{p_0+p_1+p_2=d\}$, with coordinate measure
$d\pi_d=dp_0\,dp_1$ and $p_2=d-p_0-p_1$.
Any two centers may be free, since changes between these coordinates
have Jacobian of absolute value one. Moving the selected center by
$d$ sends $\Pi_0$ to $\Pi_d$ with this measure unchanged, and
$$
 \int_\R\int_{\Pi_d}H\,d\pi_d\,d\nu_s(d)
                      =\langle H\rangle_s.
$$
For the selected vertex $v$, the weights in this bracket are
exactly those defining $\widetilde V_v^{(v)}$.
Thus Lemma~\ref{lem:triangle} and \eqref{eq:row-cost-budget} yield
$$
 \int_2^{s_+}\int_{\Pi_0}
 |\tr(D_0W_0D_1W_1D_2W_2)|\,d\pi_0\,\frac{ds}{s}
 \le C\sum_v\int_2^{s_+}\widetilde V_v^{(v)}(s)\,ds
 \le C(N_0+N_1+N_2).
$$
The factor $s$ in the comparison cancels the scale denominator.
Expand the trace and substitute $x=i-p_0$, $y=j-p_1$ in its
center integral. Its $(i,j,k)$ term becomes
$$
 A_0^s(i,j)A_1(j,k)A_2(k,i)
    (f_0^s*f_1^s*f_2^s)(i+j+k)
 =A_0^s(i,j)A_1(j,k)A_2(k,i)\psi_{\sqrt s}(i+j+k).
$$
The sums are finite, and Gaussian decay on the compact scale interval
justifies the integrations. The integral of the trace before taking
absolute values is exactly $\delta$ times the form in the proposition.
Division by $\delta=m^{-1/2}$ proves the claim.
\end{proof}

\section{Kernel increments and transference}\label{sec:transference}

For a sequence $(A_N)_{N\in\N}$ of measurable real-valued functions
and integers $m,M\ge1$, define
$$
 D_m^M(A)=
 \sup_{\substack{M\le a_1<b_1\le\cdots\le a_k<b_k\\
                 a_\ell,b_\ell\in\N,\ k\le m}}
       \sum_{\ell=1}^k|A_{b_\ell}-A_{a_\ell}|,
 \qquad D_m(A)=D_m^1(A).
$$
The empty list contributes zero. These are countable suprema of
measurable functions and hence are measurable.

\begin{lemma}\label{lem:jump}
If $\int D_m(A)\,d\mu=o(m)$, then $A_N$ has a finite limit
almost everywhere as $N\to\infty$.
\end{lemma}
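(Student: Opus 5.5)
The plan is to argue by contradiction through the jump count. Fix $\varepsilon>0$ and let $E_\varepsilon$ be the set of $x$ at which the sequence $(A_N(x))$ has infinitely many disjoint increments of size at least $\varepsilon$. More precisely, $x\in E_\varepsilon$ if there are integers $a_1<b_1\le a_2<b_2\le\cdots$ with $|A_{b_\ell}(x)-A_{a_\ell}(x)|\ge\varepsilon$ for every $\ell$.

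If $x\notin E_\varepsilon$ for every rational $\varepsilon>0$, then $(A_N(x))$ is Cauchy, and hence converges to a finite limit. Indeed, suppose it were not Cauchy. Then for some $\varepsilon$ and every $M$ there are $M\le a<b$ with $|A_b-A_a|\ge\varepsilon$. Choosing such pairs greedily, each starting beyond the previous right endpoint, produces infinitely many disjoint $\varepsilon$-increments. So it suffices to show $\mu(E_\varepsilon)=0$ for each fixed $\varepsilon$; the exceptional set is then the countable union $\bigcup_{\varepsilon\in\mathbb Q_{>0}}E_\varepsilon$. Real-valuedness of each $A_N$ is implicit, so a Cauchy sequence has a finite limit.

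Next I bound $D_m$ from below on $E_\varepsilon$. For $x\in E_\varepsilon$ and any $m$, the first $m$ of the disjoint increments form an admissible list with $k=m$ and $M=1$. Hence $D_m(A)(x)\ge m\varepsilon$. Measurability of $E_\varepsilon$ needs a remark. It equals $\bigcap_m\{D_m(A)\ge m\varepsilon\}$ up to the convention of $\ge$ versus $>$: an admissible list giving sum at least $m\varepsilon$ need not consist of $\varepsilon$-increments. So instead I avoid measurability of $E_\varepsilon$ and work with $F_{m}=\{D_m(A)\ge m\varepsilon\}$, which is measurable because $D_m$ is. Since $E_\varepsilon\subseteq F_m$ for every $m$, Chebyshev gives
$$
 \mu^*(E_\varepsilon)\le\mu(F_m)\le\frac{1}{m\varepsilon}\int D_m(A)\,d\mu .
$$
The right side tends to $0$ as $m\to\infty$ by the hypothesis $\int D_m(A)\,d\mu=o(m)$. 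Therefore $E_\varepsilon$ is contained in a null set. This is enough: the set of nonconvergence is then contained in the measurable null set $\bigcup_{\varepsilon\in\mathbb Q_{>0}}\bigcap_mF_m$.

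The only delicate point is this bookkeeping. First, the greedy construction must really produce ordered disjoint intervals in the sense of the definition, namely $b_\ell\le a_{\ell+1}$; this is satisfied by starting each new search at $M=b_\ell$. Second, the argument must use outer measure, or the measurable superset $\bigcap_mF_m$, rather than asserting measurability of $E_\varepsilon$ directly. I do not expect any genuine obstacle beyond this.
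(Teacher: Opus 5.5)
Your proposal is correct and follows essentially the paper's argument: choose disjoint $\varepsilon$-increments greedily to get $D_m(A)\ge m\varepsilon$ on the bad set, apply Markov's inequality together with the $o(m)$ hypothesis, and finish with a countable union over $\varepsilon$. The outer-measure detour is harmless but not needed. By your own greedy construction, your $E_\varepsilon$ equals $\bigcap_{M\ge1}\bigcup_{b>a\ge M}\{|A_b-A_a|\ge\varepsilon\}$, so it is measurable directly, and this is the form the paper uses.
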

\begin{proof}
For $\varepsilon>0$, let
$$
 E_\varepsilon=
 \bigcap_{M=1}^{\infty}
       \bigcup_{\substack{b>a\ge M\\a,b\in\N}}
             \{|A_b-A_a|>\varepsilon\}.
$$
This set is measurable. At any point of $E_\varepsilon$, choose
each next interval beyond the preceding right endpoint.
Then $D_m(A)\ge m\varepsilon$ for every $m$, and
$$
 \varepsilon\mu(E_\varepsilon)
       \le\frac1m\int D_m(A)\,d\mu\longrightarrow0.
$$
Failure of the Cauchy property implies membership in some
$E_{1/j}$, $j\in\N$. Outside their union the sequence is Cauchy
in $\R$, so it has a finite limit.
\end{proof}

Put
$$
 \gamma=g_3,\qquad K=\gamma-\frac32\gamma'',\qquad
 H=\gamma^{(3)},\qquad
 \mathcal P=\operatorname{span}_{\R}\{K_r,H_r:r>0\}.
$$
Thus $\mathcal P$ consists of finite real linear combinations.
Every profile in $\mathcal P$ is a Schwartz function.

\begin{lemma}\label{lem:gaussian-increments}
For each $P\in\mathcal P$ there are constants $R_P\ge4$ and
$C_P<\infty$ with the following property.
With the arrays and coefficients of Proposition \ref{prop:lattice},
and at most $m$ disjoint intervals at each output entry, all with
left endpoints at least $R_P$, one has
$$
 \left|\sum_{i,j,k}A_0(i,j)A_1(j,k)A_2(k,i)
       \sum_\ell\beta_\ell(i,j)
       (P_{b_\ell(i,j)}-P_{a_\ell(i,j)})(i+j+k)\right|
 \le C_P\sqrt m\,(N_0+N_1+N_2).
$$
The constants do not depend on the endpoints, arrays or $m$.
\end{lemma}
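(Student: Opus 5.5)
The plan is to reduce to Proposition~\ref{prop:lattice} through the scale-derivative identities from the proof outline, and then to handle dilates and linear combinations.

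\textbf{Step 1: the identities.} Differentiate the dilation family $\gamma_u$ in $u$. Since $g_3$ has variance $3$, the dilate $\gamma_u=g_{3u^2}$ satisfies the heat equation in $s=3u^2$. Hence $u\,\partial_u\gamma_u=3u^2\gamma_u''$, up to the normalisation $(g^{(r)})_u=u^{-r}$-factors. Writing $F_u=(F)_u$ for $F=\gamma^{(r)}$, one checks
$$
u\,\partial_u(\gamma^{(r)})_u=-(1+r)(\gamma^{(r)})_u-(\,t\gamma^{(r+1)})_u ,
$$
and by the Gaussian relation $t\gamma=-3\gamma'$ this becomes a combination of $(\gamma^{(r)})_u$ and $(\gamma^{(r+2)})_u$. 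The choice $K=\gamma-\frac32\gamma''$ kills the lower-order term, giving $u\partial_uK_u=c\,(\gamma^{(4)})_u$. Likewise $u\partial_u H_u$ is a combination of $(\gamma^{(3)})_u$ and $(\gamma^{(5)})_u$. After the change of variables $s=u^2$, so that $du/u=ds/(2s)$, and $\psi_{\sqrt s}$ with $\psi=g_3^{(r)}$, this yields
$$
K_b-K_a=-\tfrac94 I^{\gamma^{(4)}}_{a,b},\qquad H_b-H_a=\tfrac32\bigl(I^{\gamma^{(3)}}_{a,b}+I^{\gamma^{(5)}}_{a,b}\bigr).
$$
I would verify these by direct differentiation rather than track constants. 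Only the structure (a fixed finite combination of $I^\psi$ with $r\in\{3,4,5\}$) matters.

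\textbf{Step 2: dilates.} For $P=K_r$, note $(K_r)_b=K_{rb}$, so $(K_r)_b-(K_r)_a=K_{rb}-K_{ra}=-\frac94 I^{\gamma^{(4)}}_{ra,rb}$. The intervals $(ra_\ell,rb_\ell)$ remain disjoint, there are at most $m$ of them, and their left endpoints are at least $4$ provided $a_\ell\ge 4/r$. Applying Proposition~\ref{prop:lattice} with $\psi=g_3^{(4)}$ gives the bound with constant $\frac94C$. For $H_r$, apply the proposition twice, once with $r=3$ and once with $r=5$, with the same arrays, intervals and coefficients, and add.

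\textbf{Step 3: linear combinations.} For $P=\sum_{n=1}^L c_n X^{(n)}_{r_n}$, where each $X^{(n)}\in\{K,H\}$, set $R_P=\max(4,\max_n 4/r_n)$. The left-hand side is linear in $P$ once the arrays, intervals and coefficients are fixed. The triangle inequality therefore gives
$$
C_P=\sum_n|c_n|\cdot\tfrac94\cdot 2C
$$
(or a similar expression), which is independent of the endpoints, the arrays and $m$.

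\textbf{Main obstacle.} The only substantive point is the exact scale-derivative identity in Step 1. One must confirm that $u\partial_u$ of each dilate is an exact multiple of the $I^\psi$ integrand $\psi_{\sqrt s}\,ds/s$, with no leftover multiple of $\gamma_u$ or $\gamma''_u$, which are not among the admissible profiles. If the constant $\frac32$ did not cancel exactly, the argument would fail. I would check this first using the Fourier side, where $\widehat{\gamma_u}(\xi)=e^{-3u^2\xi^2/2}$ makes $u\partial_u$ a multiplication by $-3u^2\xi^2$.
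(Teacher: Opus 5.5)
Your proposal is correct and follows the paper's proof. The same two scale-derivative identities reduce $K,H$ to Proposition~\ref{prop:lattice} with $R_P=4$, and dilates and linear combinations are handled as you do: rescale the endpoints (which preserves disjointness), choose $R_P$ with $r_jR_P\ge4$, and apply the triangle inequality.

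There is one slip in Step~1. With $f_R(t)=R^{-1}f(t/R)$ one has $u\partial_u(\gamma^{(r)})_u=-(\gamma^{(r)}+t\gamma^{(r+1)})_u=r(\gamma^{(r)})_u+3(\gamma^{(r+2)})_u$. Your coefficient $-(1+r)$ is the one for $u^r\,u\partial_u\bigl[(\gamma_u)^{(r)}\bigr]$, and it omits the $+r$ coming from the factor $u^r$ in $(\gamma^{(r)})_u=u^r(\gamma_u)^{(r)}$. Taken literally, your formula would give $u\partial_uK_u=3(\gamma'')_u-\tfrac92(\gamma^{(4)})_u$, with no cancellation. The identities you finally state for $K_b-K_a$ and $H_b-H_a$ are nevertheless the correct ones.
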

\begin{proof}
Differentiating $t\gamma=-3\gamma'$ and the dilation formula gives
$$
 R\partial_R(\gamma^{(j)})_R
       =j(\gamma^{(j)})_R+3(\gamma^{(j+2)})_R.
$$
Apply this with $j=0,2,3$ and integrate in $dR/R$ from $a$ to $b$.
The substitution $s=R^2$ gives
$$
 K_b-K_a=-\frac94I^{\gamma^{(4)}}_{a,b},\qquad
 H_b-H_a=\frac32\left(I^{\gamma^{(3)}}_{a,b}
                             +I^{\gamma^{(5)}}_{a,b}\right).
$$
Proposition \ref{prop:lattice} therefore proves the assertion
for $K,H$ with $R_P=4$.
For $P=\sum_{j=1}^Jc_j(h_j)_{r_j}$, where $h_j\in\{K,H\}$,
use $(P_R)=\sum_jc_j(h_j)_{r_jR}$.
Choose $R_P\ge4$ so that $r_jR_P\ge4$ for every $j$.
For each $j$, multiplying all endpoints of a selected list by
$r_j$ preserves its disjointness. Apply the estimate for $h_j$,
multiply by $|c_j|$, and add. For $P=0$ the assertion is immediate.
\end{proof}

Write $b(t)=\1_{(0,1]}(t)$ on $\R$. Its samples at integer radii are
$$
 b_N(n)=\frac1N\1_{\{1,\ldots,N\}}(n),\qquad n\in\Z.
$$
The next lemma allows an $\ell^1$ error in this kernel.
Throughout its statement and proof, $S,Q$ are commuting
measure-preserving transformations, and $F,G$ are real
representatives bounded everywhere in absolute value by one.
Write $B(F,G)=(B_N(F,G))_{N\in\N}$ for the sequence of averages.

\begin{lemma}\label{lem:transfer}
Let $P\in\mathcal P$, let $\varepsilon>0$, and let $M\ge R_P$
be an integer such that
$$
 \sum_{n\in\Z}|b_N(n)-P_N(n)|\le\varepsilon
       \qquad\text{for every integer }N\ge M.
 \eqtag{eq:kernel-approximation}
$$
Then
$$
 \int D_m^M(B(F,G))\,d\mu
       \le C_P\sqrt m+4m\varepsilon,\qquad m\in\N.
 \eqtag{eq:sharp-increment-bound}
$$
No invertibility is assumed.
\end{lemma}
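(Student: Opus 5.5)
The plan is a Calder\'on-type transference of Lemma~\ref{lem:gaussian-increments} along forward orbit boxes. First I reduce to finitely many scales. By monotone convergence it suffices to bound $\int D^{M}_{m,N_*}\,d\mu$ uniformly in $N_*$, where $D^M_{m,N_*}$ is the same supremum restricted to endpoints $M\le a_\ell<b_\ell\le N_*$. This is a maximum over finitely many configurations. So for each $x$ one can pick, measurably, a near-maximizing list of at most $m$ disjoint intervals $(a_\ell(x),b_\ell(x))$ together with signs $\beta_\ell(x)=\pm1$, so that
\[
D^M_{m,N_*}(B(F,G))(x)=\sum_\ell \beta_\ell(x)\bigl(B_{b_\ell}-B_{a_\ell}\bigr)(x).
\]
Next write $B_N(F,G)(x)=\sum_{n\in\Z} b_N(n)F(S^nx)G(Q^nx)$ and replace $b_N$ by $P_N$. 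The two endpoints of each interval each cost at most $\varepsilon$ by \eqref{eq:kernel-approximation}, since $|F|,|G|\le1$. Summing over the $m$ intervals therefore costs at most $2m\varepsilon$. A second loss of the same size will come from truncating the tails of $P_N$ in $n$; this accounts for the constant $4m\varepsilon$. The $n\le0$ terms of $P_N$ are not defined dynamically without invertibility, and this is handled by the orbit box below.

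For the triangular structure, fix a large integer $L\gg N_*$ and a shift $a\ge 3L$. For a base point $x$ and $0\le i,j\le L$, consider the output point $x_{ij}=S^{a-j}Q^{a-i}x$, and put $n=i+j+k$. Then
\[
S^{n}x_{ij}=S^{a+i+k}Q^{a-i}x,\qquad Q^{n}x_{ij}=S^{a-j}Q^{a+j+k}x .
\]
The first depends only on $(k,i)$ and the second only on $(j,k)$, and all exponents are nonnegative for $|k|\le 2L$. So I set
\[
A_0(i,j)=\one_{[0,L]^2}(i,j),\qquad A_1(j,k)=G(S^{a-j}Q^{a+j+k}x),\qquad A_2(k,i)=F(S^{a+i+k}Q^{a-i}x),
\]
with $k$ restricted to $|k|\le 2L$ and the coefficients at $(i,j)$ taken to be $\beta_\ell(x_{ij})$ with intervals $(a_\ell(x_{ij}),b_\ell(x_{ij}))$. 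Since $M\ge R_P$, Lemma~\ref{lem:gaussian-increments} applies with $N_0+N_1+N_2\le CL^2$. It bounds the orbit-box sum of $\sum_\ell\beta_\ell(P_{b_\ell}-P_{a_\ell})$ by $C_P\sqrt m\,L^2$.

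Two discrepancies remain between this lattice form and the sum over the box of the $P$-smoothed increments at the points $x_{ij}$. The main one is the tail $|i+j+k|$ large. Here I use that $P$ is Schwartz and all scales are at most $N_*\ll L$. One has $\sum_{|n|>L}|P_N(n)|\to0$ as $L\to\infty$, uniformly for $N\le N_*$, so the tail contributes less than $m\varepsilon$ per point once $L$ is large. Because $|A_1|,|A_2|\le1$, the lattice sum over $|k|\le 2L$ then captures every $n$ with $|n|\le L$. The other discrepancy is that the negative-$n$ part of $P_N$ appears in the lattice form but not in the orbit average. It is bounded by $\sum_{n\le0}|P_N(n)|\le\varepsilon$, because $b_N$ vanishes there and \eqref{eq:kernel-approximation} holds. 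This is part of the $4m\varepsilon$ budget.

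Finally, integrate over $x$. Since $S,Q$ commute and preserve $\mu$, each $x\mapsto x_{ij}$ preserves $\mu$, so $\int \sum_{i,j}D^M_{m,N_*}(x_{ij})\,d\mu=(L+1)^2\int D^M_{m,N_*}\,d\mu$. Dividing by $(L+1)^2$ gives $C_P\sqrt m+4m\varepsilon$ after absorbing constants into $C_P$, and letting $N_*\to\infty$ proves \eqref{eq:sharp-increment-bound}. I expect the main obstacle to be the bookkeeping in this step. One must check that the tail truncation, the negative-$n$ terms and the measurable selection together cost at most $4m\varepsilon$ plus an error vanishing as $L\to\infty$. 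One must also check that only forward powers appear, which the choice of $a$ guarantees.
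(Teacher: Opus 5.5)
Your proposal is correct and is essentially the paper's argument: a Calder\'on-type transference of Lemma~\ref{lem:gaussian-increments} along forward orbit boxes, with the $\ell^1$ kernel error absorbed before the orbit sums are identified. The differences are only technical: you truncate the endpoints at $N_*$ and select an exact maximizer, which avoids the paper's factor-$2$ loss from choosing a list exceeding $D_m^M/2$ and in fact gives $2m\varepsilon$; and you shift the base point by $a\ge3L$ instead of building the cutoffs $\one_{C_{2L}}$ into $A_1,A_2$. Your error bookkeeping is looser than it needs to be, since the window $|k|\le2L$ misses some $n\in[-L,0]$ when $i+j>L$, and no Schwartz-tail estimate is needed once $2L\ge N_*$. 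This is harmless: comparing the lattice form directly with the sums of the $b$-kernel $\sum_\ell\beta_\ell(b_{b_\ell}-b_{a_\ell})$, which vanishes for $n\le0$ and for $n>N_*$, leaves an error of at most $\sum_\ell\sum_{n\in\Z}\bigl(|b_{b_\ell}-P_{b_\ell}|+|b_{a_\ell}-P_{a_\ell}|\bigr)(n)\le2m\varepsilon$ per box point.
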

\begin{proof}
For this proof abbreviate $B_N=B_N(F,G)$ and $D_m^M=D_m^M(B)$.
Since $|B_N|\le1$, we have $D_m^M\le2m$.
Enumerate the lists of at most $m$ ordered disjoint integer
intervals with endpoints at least $M$.
Where $D_m^M>0$, select the first list whose sum of absolute
increments exceeds $D_m^M/2$; where $D_m^M=0$, select the empty list.
Each sum and $D_m^M$ are measurable, so this selection is measurable.
For each selected increment choose its sign, taking the sign one
at zero. Denote these signs by $\beta_\ell(x)$ and put
$$
 \begin{aligned}
 \kappa_x^b(n)&=\sum_\ell\beta_\ell(x)
                         (b_{b_\ell(x)}-b_{a_\ell(x)})(n),\\
 \kappa_x^P(n)&=\sum_\ell\beta_\ell(x)
                         (P_{b_\ell(x)}-P_{a_\ell(x)})(n).
 \end{aligned}
$$
These kernels are measurable in $x$ at every $n$.
The first is finitely supported, vanishes for $n\le0$, and satisfies
$$
 \begin{aligned}
 \sum_{n\ge1}\kappa_x^b(n)F(S^nx)G(Q^nx)&\ge\frac12D_m^M(x),\\
 \sum_n|\kappa_x^b(n)|&\le2m,\\
 \sum_n|\kappa_x^P(n)-\kappa_x^b(n)|&\le2m\varepsilon.
 \end{aligned}
$$
The last bound applies \eqref{eq:kernel-approximation} to both
endpoints of every selected interval.

For an integer $L\ge1$, put $C_L=\{0,\ldots,L\}^2$.
For $z\in X$, define
$$
 \begin{aligned}
 A_0(i,j)&=\1_{C_L}(-i,-j),\\
 A_1(j,k)&=
 \begin{cases}
 F(S^{j+k}Q^{-j}z),&(j+k,-j)\in C_{2L},\\
 0,&\text{otherwise},
 \end{cases}\\
 A_2(k,i)&=
 \begin{cases}
 G(S^{-i}Q^{k+i}z),&(-i,k+i)\in C_{2L},\\
 0,&\text{otherwise}.
 \end{cases}
 \end{aligned}
$$
Every power used in these definitions is nonnegative.
At an entry with $A_0(i,j)=1$, attach the list selected at
$S^{-i}Q^{-j}z$; use empty lists elsewhere.
The maps $(i,j)\mapsto(-i,-j)$,
$(j,k)\mapsto(j+k,-j)$ and $(k,i)\mapsto(-i,k+i)$
are bijections of $\Z^2$. The support counts are therefore at most
$|C_L|,|C_{2L}|,|C_{2L}|$.

Let $\Lambda_P(z)$ and $\Lambda_b(z)$ be the array forms with
the selected kernels $\kappa^P$ and $\kappa^b$, respectively.
Lemma \ref{lem:gaussian-increments} gives
$$
 |\Lambda_P(z)|\le C_P\sqrt m\,(|C_L|+2|C_{2L}|).
$$
For each $(i,j)$ the map $k\mapsto n=i+j+k$ is a bijection of
$\Z$. Since $|A_1|,|A_2|\le1$, the kernel error gives
$$
 |\Lambda_b(z)-\Lambda_P(z)|\le2m\varepsilon|C_L|.
$$
This comparison includes all $n\in\Z$. In particular, it absorbs
the $n\le0$ contributions of $\kappa^P$, since $\kappa^b$ is zero
there.

Now write $p=-i$, $q=-j$, $n=i+j+k$ in $\Lambda_b$.
Only $n\ge1$ occurs, and commutativity gives
$$
 F(S^{p+n}Q^qz)G(S^pQ^{q+n}z)
       =F(S^n(S^pQ^qz))G(Q^n(S^pQ^qz)).
$$
The input cutoffs are $\1_{C_{2L}}(p+n,q)$ and
$\1_{C_{2L}}(p,q+n)$. Both are one when $(p,q)\in C_L$ and
$1\le n\le L$. Their product lies in $[0,1]$.
Thus the difference between $\Lambda_b(z)$ and the sum of the
selected increments over the points $S^pQ^qz$, $(p,q)\in C_L$,
has absolute value at most
$$
 \sum_{(p,q)\in C_L}\sum_{n>L}|\kappa^b_{S^pQ^qz}(n)|.
$$
Integrate in $z$, use preservation of $\mu$, and divide by $|C_L|$.
The preceding estimates give
$$
 \frac12\int D_m^M\,d\mu
 \le C_P\sqrt m\left(1+2\frac{|C_{2L}|}{|C_L|}\right)
       +2m\varepsilon
       +\int\sum_{n>L}|\kappa_x^b(n)|\,d\mu(x).
$$
For each $x$, the last tail tends to zero; it is bounded by $2m$.
Dominated convergence applies. Since
$|C_{2L}|/|C_L|\to4$, letting $L\to\infty$ and absorbing the
absolute factors into $C_P$ proves \eqref{eq:sharp-increment-bound}.
\end{proof}

\begin{remark}\label{rem:bounded-variation}
The same array estimate gives quantitative variation bounds for
the two smooth kernels. Take real inputs with $|F|,|G|\le1$,
suppose here that $S,Q$ are invertible, and put
$$
 W_R^h(x)=\sum_{n\in\Z}h_R(n)F(S^nx)G(Q^nx),
       \qquad h=K,H,\quad R\ge4.
$$
For a Schwartz profile, comparison on unit intervals gives
$$
 \sum_n|h_R(n)|\le\|h\|_1+R^{-1}\|h'\|_1.
$$
On each compact interval of radii, Gaussian decay also gives a
common summable majorant. Hence these series have continuous
sample paths. Let $\mathcal D_m(W^h)$ denote the supremum of
sums of at most $m$ absolute increments over ordered disjoint intervals
in $[4,\infty)$. Rational endpoints give the same supremum.
The measurable selection used above supplies a signed kernel
$\kappa_x$ whose signed sum is at least half this supremum, with
$\sum_n|\kappa_x(n)|\le Cm$.

Use the same array formulas with the symmetric boxes
$C_L=\{-L,\ldots,L\}^2$ and $C_{2L}$.
All powers are defined by invertibility. For $|n|\le L$ both
input cutoffs equal one. There is no approximation error, and the
preceding argument gives
$$
 \frac12\int\mathcal D_m(W^h)\,d\mu
 \le C\sqrt m\left(1+2\frac{|C_{2L}|}{|C_L|}\right)
       +\int\sum_{|n|>L}|\kappa_x(n)|\,d\mu(x).
$$
The tail is dominated by $Cm$ and tends to zero. Thus
$\int\mathcal D_m(W^h)\,d\mu\le C\sqrt m$.
For $\rho>2$, define $V^\rho(W^h)$ as the supremum of the
$\ell^\rho$ norms of consecutive increments over finite increasing
partitions of $[4,\infty)$. This function is measurable by continuity.
If the absolute increments of one partition in decreasing order
are $d_1\ge d_2\ge\cdots$, then $kd_k\le\mathcal D_k(W^h)$.
Grouping the ranks $2^j\le k<2^{j+1}$ yields
$$
 V^\rho(W^h)\le
       \sum_{j\ge0}2^{-j(1-1/\rho)}\mathcal D_{2^j}(W^h).
$$
After integration, the right side is bounded by
$C\sum_{j\ge0}2^{-j(1/2-1/\rho)}<\infty$.
This proves $\|V^\rho(W^K)\|_1+\|V^\rho(W^H)\|_1\le C_\rho$
for the normalized real inputs. For complex inputs, expand into
the four real pairs and use the triangle inequality for $V^\rho$.
Rescaling then gives a bound of
$C_\rho\|F\|_\infty\|G\|_\infty$ for arbitrary bounded inputs.
\end{remark}

\section{Approximation of the forward averaging kernel}\label{sec:recovery}

\begin{lemma}\label{lem:recovery}\label{lem:kernel-recovery}
For every $\varepsilon>0$, there exist $P\in\mathcal P$ and an
integer $M\ge R_P$ such that
$$
 \sum_{n\in\Z}|b_N(n)-P_N(n)|\le\varepsilon
       \qquad(N\in\N,\ N\ge M).
$$
\end{lemma}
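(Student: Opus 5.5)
The plan has two stages: first approximate $b=\1_{(0,1]}$ in $L^1(\R)$ by some $P\in\mathcal P$, then pass from the continuous $L^1$ error to the discrete sampled error uniformly in large $N$.

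\emph{Density.} Split $b=b_e+b_o$ into its even and odd parts; both lie in $L^1(\R)$. Since $K$ is even and $H$ is odd, it suffices to show two things. The dilates $\{K_r\}$ span a dense subspace of the even functions in $L^1(\R)$. The dilates $\{H_r\}$ span a dense subspace of the odd functions. Restricting to $(0,\infty)$, this amounts to density of each family in $L^1(0,\infty)$.

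We argue by Hahn--Banach. Let $\phi\in L^\infty(0,\infty)$ annihilate all $K_r$, that is, $\int_0^\infty\phi(t)K(t/r)\,dt=0$ for all $r>0$. Write $K$ as a polynomial times $e^{-t^2/6}$ and set $u=r^{-2}$. The vanishing condition then becomes a statement about the Gaussian transform $\mathcal G(a)=\int_0^\infty\phi(t)e^{-at^2/6}\,dt$ for $a>0$. Indeed, $\gamma''$ and $t^2\gamma$ are related by differentiation in $a$, so the annihilation condition becomes a first-order ODE. With the constant $-\frac32$ in $K$, this ODE should read $aF'(a)+\frac32F(a)=0$ for a suitable $F$ built from $\mathcal G$ (possibly $F=a^{1/2}\mathcal G$, or $\mathcal G$ itself). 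Its solutions are $F=ca^{-3/2}$. The bound $|\mathcal G(a)|\le\|\phi\|_\infty\sqrt{3\pi/(2a)}$ grows only like $a^{-1/2}$ as $a\to0$, which is incompatible with $a^{-3/2}$ unless $c=0$. Hence $\mathcal G\equiv0$.

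Next we show $\phi=0$. Change variables $\tau=t^2$, so that $\mathcal G$ is a Laplace transform. Vanishing of $\mathcal G$ on $(0,\infty)$ gives, by differentiating at $a=1$, vanishing of all moments against $\tau^ne^{-\tau/6}$. Polynomials times $e^{-\tau/6}$ are dense in the relevant weighted $L^1$ space; equivalently, Laplace transform injectivity applies. So $\phi=0$.

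For $H=\gamma'''$ the odd analogue works the same way, using a transform with kernel $te^{-at^2/6}$, and gives the same type of ODE. The step I expect to require care is getting the exact exponents in the ODE right; the growth bound is then immediate. The outcome is a $P\in\mathcal P$ with $\|b-P\|_{L^1(\R)}\le\varepsilon/3$.

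\emph{Sampling.} Write
\[
\sum_n|b_N(n)-P_N(n)|\le\sum_n\Bigl|b_N(n)-\int_{n}^{n+1}b_N\Bigr|+\sum_n\Bigl|P_N(n)-\int_n^{n+1}P_N\Bigr|+\|b-P\|_{L^1}.
\]
The first sum is bounded by $C/N$: the integral of $b_N$ over $[n,n+1]$ equals $b_N(n)$ up to boundary cells, with the exact alignment depending on the convention $(0,1]$. The second sum is at most $\|P_N'\|_{L^1}=N^{-1}\|P'\|_1$ by the unit-interval comparison from Remark~\ref{rem:bounded-variation}; this is finite because $P$ is Schwartz. Both terms are at most $\varepsilon/3$ once $N\ge M$, and we choose $M\ge R_P$ large enough.
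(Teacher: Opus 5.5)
Your proposal is correct and follows essentially the paper's route: the even/odd split of $b$, a Hahn--Banach annihilator whose Gaussian transform satisfies $aF'(a)+\frac32F(a)=0$, the growth bound as $a\downarrow0$ that kills the constant, injectivity of the transform, and a unit-interval sampling comparison. On your hedge, the ODE holds for $\mathcal G$ itself, because $K\propto(\frac32-\frac{t^2}{6})e^{-t^2/6}$, and the odd case gives a transform bound of order $a^{-1}$, which is still $o(a^{-3/2})$. The remaining differences are cosmetic. The paper proves injectivity by pushing the measure forward under $t\mapsto e^{-t^2}$ and applying Weierstrass, rather than citing Laplace injectivity. Its step function lives on the intervals $((n-1)/N,n/N]$, so $b$ is sampled exactly and your $2/N$ boundary term does not arise.
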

\begin{proof}
We first show that the positive dilates of $K$, and separately
those of $H$, span dense subspaces of $L^1(0,\infty)$.
For $q=0,1$, put
$$
 h_q(t)=t^q\left(\frac32-\frac{t^2}{6}\right)e^{-t^2/6}.
$$
On the positive half-line,
$K=\gamma(0)h_0$ and $H=(2\gamma(0)/9)h_1$.
It therefore suffices to prove density for each $h_q$.

Let $f\in L^\infty(0,\infty)$ annihilate all dilates of $h_q$,
and set
$$
 F(a)=\int_0^\infty f(t)t^qe^{-at^2}\,dt,\qquad a>0.
$$
On compact intervals of $a>0$, the integrands and their first
$a$ derivatives have an integrable Gaussian majorant.
Thus differentiation under the integral is valid.
Set $a=1/(6R^2)$ in the annihilation identity and remove its
nonzero dilation factor. We obtain
$$
 aF'(a)+\frac32F(a)=0.
$$
Consequently $(a^{3/2}F(a))'=0$, so $F(a)=C_0a^{-3/2}$.
On the other hand, the substitution $u=\sqrt a\,t$ gives
$$
 |F(a)|\le\|f\|_\infty a^{-(q+1)/2}
                  \int_0^\infty u^qe^{-u^2}\,du.
$$
Multiplying by $a^{3/2}$ and letting $a\downarrow0$ gives
$C_0=0$, since $q=0,1$. Hence $F(a)=0$ for every $a>0$.

Push the finite complex measure $f(t)t^qe^{-t^2}\,dt$ forward by
$t\mapsto e^{-t^2}$, and regard it as a measure $\nu$ on $[0,1]$.
For every integer $j\ge0$,
$$
 \int_{[0,1]}z^j\,d\nu(z)=F(j+1)=0.
$$
Polynomials are uniformly dense in $C[0,1]$, so $\nu$ integrates
every continuous function to zero and is the zero measure.
The map $t\mapsto e^{-t^2}$ is a bijection onto $(0,1)$ with
measurable inverse. Its original measure is therefore zero.
Since $t^qe^{-t^2}>0$ for $t>0$, this gives $f=0$ almost everywhere.
If the closed span of the dilates were proper, Hahn--Banach and
$L^1$--$L^\infty$ duality would supply a nonzero bounded annihilator.
The preceding argument excludes one, proving the asserted density.

On $(0,\infty)$ choose finite real linear combinations $E$ of
dilates of $K$ and $O$ of dilates of $H$ such that
$$
 \|E-b/2\|_{L^1(0,\infty)}<\varepsilon/8,\qquad
 \|O-b/2\|_{L^1(0,\infty)}<\varepsilon/8.
$$
Real coefficients suffice by taking real parts of any approximants.
The same formulas on $\R$ make $E$ even and $O$ odd.
For $P=E+O\in\mathcal P$,
$$
 \|P-b\|_{L^1(\R)}
 \le2\|E-b/2\|_{L^1(0,\infty)}
      +2\|O-b/2\|_{L^1(0,\infty)}
 <\varepsilon/2.
$$

It remains to pass to lattice samples.
For an integer $N\ge1$, let $P^{[N]}(t)=P(n/N)$ on
$((n-1)/N,n/N]$, for every $n\in\Z$.
On the same interval, $b(t)=b(n/N)$ almost everywhere.
The fundamental theorem of calculus gives
$$
 \|P-P^{[N]}\|_1\le\frac1N\|P'\|_1.
$$
Indeed, on each interval the integral of the endpoint error is at
most its length $1/N$ times the integral of $|P'|$ there.
Consequently,
$$
 \begin{aligned}
 \sum_{n\in\Z}|b_N(n)-P_N(n)|
   &=\int_\R|b(t)-P^{[N]}(t)|\,dt\\
   &\le\|b-P\|_1+\frac1N\|P'\|_1.
 \end{aligned}
$$
Choose an integer $M\ge\max\{R_P,\,2\|P'\|_1/\varepsilon,\,1\}$.
Then the last expression is at most $\varepsilon$ for every
integer $N\ge M$, as required.
\end{proof}

\section{Integrable functions and applications}\label{sec:extensions}

\subsection{Proof of Theorem~\ref{thm:main}}

First take real bounded inputs, and normalize their essential
suprema to be at most one. Choose representatives bounded everywhere
by one. Fix $\varepsilon>0$, and choose $P,M$ from
Lemma \ref{lem:kernel-recovery}.
Lemma \ref{lem:transfer} gives
$$
 \int D_m^M(B(F,G))\,d\mu\le C_P\sqrt m+4m\varepsilon.
$$
For any selected list of integer intervals, split an interval at
$M$ if it crosses that point. There is at most one such interval.
The portions below $M$ have integer endpoints and are disjoint,
so there are at most $M-1$ of them. Since $|B_N(F,G)|\le1$, their
total contribution is at most $2M$.
The portions above $M$ form at most $m$ disjoint intervals.
The triangle inequality therefore gives
$$
 D_m(B(F,G))\le D_m^M(B(F,G))+2M.
$$
Keep $\varepsilon$, $P$ and $M$ fixed and let $m\to\infty$.
We obtain
$$
 \limsup_{m\to\infty}\frac1m\int D_m(B(F,G))\,d\mu
       \le4\varepsilon.
$$
Since $\varepsilon>0$ was arbitrary, this integral is $o(m)$.
Lemma \ref{lem:jump} proves convergence of the forward averages.
All transformations used in this argument have nonnegative powers,
so the bounded conclusion already holds without invertibility.
Writing each complex input as its real part plus $i$ times its
imaginary part expresses $B_N(F,G)$ as a linear combination of
four averages for real pairs. Intersecting their conull convergence
sets proves the bounded complex case.

It remains to extend the input range.
If $p,q<\infty$, put, for $M\in\N$,
$F_M=F\1_{\{|F|\le M\}}$ and $G_M=G\1_{\{|G|\le M\}}$.
Write $\mathcal I_S,\mathcal I_Q$ for the invariant sigma-algebras.
Discrete H\"older and Birkhoff give, on a common conull set,
$$
 \begin{aligned}
 \limsup_N|B_N(F,G)-B_N(F_M,G_M)|
 \le{}&
 \E(|F-F_M|^p\mid\mathcal I_S)^{1/p}
             \E(|G|^q\mid\mathcal I_Q)^{1/q}\\
 &+\E(|F|^p\mid\mathcal I_S)^{1/p}
             \E(|G-G_M|^q\mid\mathcal I_Q)^{1/q}.
 \end{aligned}
$$
H\"older applies because $1/p+1/q\le1$, using the constant function
$1$ for a possible remaining exponent.
By \eqref{eq:conditional-tails}, the conditional expectations of
the tails decrease to zero almost everywhere. The right side therefore tends to zero.
For fixed $M$, convergence of the truncated averages and the triangle
inequality give
$$
 \limsup_{N,N'\to\infty}|B_N(F,G)-B_{N'}(F,G)|
 \le2\limsup_{N\to\infty}|B_N(F,G)-B_N(F_M,G_M)|.
$$
Letting $M\to\infty$ proves the Cauchy property almost everywhere.
If $p<\infty$ and $q=\infty$, only truncate $F$. For every $N$,
$$
 |B_N(F,G)-B_N(F_M,G)|
 \le\|G\|_\infty
       \left(\frac1N\sum_{n=1}^N|F-F_M|^p(S^nx)\right)^{1/p}.
$$
Birkhoff and conditional monotone convergence give
$$
 \limsup_N|B_N(F,G)-B_N(F_M,G)|
 \le\|G\|_\infty\E(|F-F_M|^p\mid\mathcal I_S)^{1/p}
       \longrightarrow0\quad(M\to\infty).
$$
If $p=\infty$ and $q<\infty$, truncating only $G$ gives
$$
 \limsup_N|B_N(F,G)-B_N(F,G_M)|
 \le\|F\|_\infty\E(|G-G_M|^q\mid\mathcal I_Q)^{1/q}
       \longrightarrow0\quad(M\to\infty),
$$
by the same finite-sum inequality with the roles of the inputs
interchanged. The truncated pairs are bounded in both cases, so the
preceding Cauchy estimate applies. These estimates include
$(p,q)=(1,\infty)$ and $(\infty,1)$. The case
$p=q=\infty$ was already proved, completing the argument.

\subsection{Sharpness and joint-ergodicity applications}\label{sec:applications}

The range of exponents in Theorem~\ref{thm:main} is optimal when the
conclusion is required for every commuting pair.

\begin{proposition}\label{prop:range}
Let $1\le p,q<\infty$ and $1/p+1/q>1$. There are an invertible
mixing probability-preserving transformation $T$ and nonnegative
functions $F\in L^p$, $G\in L^q$ such that
$$
 \frac1N\sum_{n=1}^N F(T^nx)G(T^nx)\longrightarrow+\infty
 \quad\text{for almost every }x.
$$
\end{proposition}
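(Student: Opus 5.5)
The plan is to take $S=Q=T$, so that the averages become ordinary Birkhoff averages of $FG$. Theorem~\ref{thm:ergodic-prelim} only treats $FG\in L^1$, so the task is to produce $F\in L^p$, $G\in L^q$ with $FG\ge0$ and $FG\notin L^1$. For such a product the averages diverge to $+\infty$ almost everywhere under any ergodic $T$. Indeed, for each $K$ apply Birkhoff's theorem to the integrable function $\min(FG,K)$, whose averages converge almost everywhere to $\int\min(FG,K)\,d\mu$. Hence the liminf of the averages of $FG$ is at least $\int\min(FG,K)\,d\mu$ on a conull set. Intersecting over $K\in\N$ and using monotone convergence, $\int\min(FG,K)\,d\mu\uparrow\int FG\,d\mu=+\infty$, which gives divergence to $+\infty$ almost everywhere. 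Mixing implies ergodicity, so it suffices to take any invertible mixing system.

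For the system, I would use the Bernoulli shift on $\{0,1\}^{\Z}$ with the uniform product measure. It is invertible, probability-preserving and mixing. I would transport functions from $[0,1]$ with Lebesgue measure through a measurable isomorphism, or more simply through the map $x\mapsto\sum_{k\ge1}x_k2^{-k}$. This map pushes the product measure forward to Lebesgue measure on $[0,1]$, so $L^r$ norms and integrals of composed functions are preserved.

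On $[0,1]$, put $F(t)=t^{-\alpha}$ and $G(t)=t^{-\beta}$. Membership $F\in L^p$ requires $\alpha p<1$, and $G\in L^q$ requires $\beta q<1$, while $FG\notin L^1$ requires $\alpha+\beta\ge1$. Since $1/p+1/q>1$, one can choose $\alpha<1/p$ and $\beta<1/q$ with $\alpha+\beta=1$. For example, take $\alpha=\theta/p$ and $\beta=\theta/q$ with $\theta=(1/p+1/q)^{-1}<1$. Composing with the coordinate map gives nonnegative $F\in L^p(\mu)$ and $G\in L^q(\mu)$ with $\int FG\,d\mu=\int_0^1t^{-1}\,dt=\infty$.

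There is no serious obstacle. The only point needing care is the divergence to $+\infty$ for nonintegrable nonnegative functions, and the truncation argument above handles it.
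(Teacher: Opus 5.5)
Your proof is correct and is essentially the paper's argument: take $S=Q=T$ a Bernoulli shift, let $F$ and $G$ be negative powers $t^{-\alpha},t^{-\beta}$ of a uniformly distributed coordinate, with $\alpha<1/p$, $\beta<1/q$ and $\alpha+\beta\ge1$, and obtain divergence by applying Birkhoff's theorem to the truncations $\min(FG,m)$ together with monotone convergence. The only difference is cosmetic: the paper works directly on $[0,1]^{\Z}$ with $U(x)=x_0$, which avoids your transport through the binary-expansion map.
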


\begin{proof}
Choose $a,b>0$ with $a<1/p$, $b<1/q$, and $a+b\ge1$.
Let $T$ be the two-sided Bernoulli shift on $[0,1]^{\Z}$ with
product Lebesgue measure, and let $U(x)=x_0$. Put
$F=U^{-a}$ and $G=U^{-b}$, assigning arbitrary finite values on
the null set $\{U=0\}$. Then $F\in L^p$, $G\in L^q$, and
$H=FG$ is nonnegative with infinite integral.

For each integer
$m\ge1$, Theorem~\ref{thm:ergodic-prelim} gives
$$
 \liminf_{N\to\infty}\frac1N\sum_{n=1}^N H(T^nx)
 \ge \int\min(H,m)\,\dd\mu
$$
on a common conull set. The truncations $\min(H,m)$ are integrable and the Bernoulli shift
is ergodic, so Theorem~\ref{thm:ergodic-prelim} applies to every $m$. Monotone
convergence gives $\int\min(H,m)\,\dd\mu\to\infty$. Taking
the supremum over $m$ proves the assertion with $S=Q=T$.
\end{proof}

Pointwise convergence need not imply convergence in $L^\infty$,
even for bounded inputs. On the same Bernoulli shift take
$F=\one_{\{x_0>1/2\}}$ and $G=1$. The limit is $1/2$ almost
everywhere. For every $N$, the cylinder on which the first $N$
sampled coordinates exceed $1/2$ has positive measure, and the
average equals $1$ there. Its $L^\infty$ distance from the limit
is therefore $1/2$ for every $N$.

For general commuting maps the limit need not be constant. The
following sufficient conditions identify it as a product of
integrals. We include the norm argument to specify exactly which ergodicity
assumptions are used. The assumption on $S^{-1}Q$ separates the two
factors in each fixed correlation; the assumption on $S\times Q$
then identifies the limit of those correlations.

\begin{corollary}\label{cor:joint}
Let $S,Q$ be commuting invertible measure-preserving transformations
of a probability space $(X,\cB,\mu)$. Suppose that $S^{-1}Q$ is ergodic and that $S\times Q$ is ergodic
on the product probability space. If $F\in L^p(\mu)$ and
$G\in L^q(\mu)$, where $1\le p,q\le\infty$ and
$1/p+1/q\le1$, then
$$
 B_N(F,G)(x)\longrightarrow
       \left(\int F\,\dd\mu\right)
       \left(\int G\,\dd\mu\right)
 \quad\text{for almost every }x.
$$
The convergence also holds in $L^r(\mu)$ for every $1\le r<\infty$
with $1/r\ge1/p+1/q$.
\end{corollary}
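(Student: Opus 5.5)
Theorem~\ref{thm:main} already gives almost-everywhere convergence of $B_N(F,G)$ to some limit. It therefore suffices to identify the limit as $\int F\,\dd\mu\int G\,\dd\mu$ in $L^2$ for bounded inputs, and then pass to the stated integrability range.

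\emph{Step 1: the $L^2$ limit via van der Corput.} Take bounded $F,G$ and first assume $\int F\,\dd\mu=0$. Put $u_n=F\circ S^n\cdot G\circ Q^n$. By invariance under $S^{-n}$,
$$
\int u_{n+h}\overline{u_n}\,\dd\mu
=\int (F\circ S^h\,\overline F)\cdot (G\circ Q^h\,\overline G)\circ (S^{-1}Q)^n\,\dd\mu .
$$
Ergodicity of $S^{-1}Q$ and the mean ergodic theorem, Theorem~\ref{thm:ergodic-prelim}, give for each fixed $h$
$$
\frac1N\sum_{n=1}^N\int u_{n+h}\overline{u_n}\,\dd\mu
\longrightarrow \int F\circ S^h\,\overline F\,\dd\mu\int G\circ Q^h\,\overline G\,\dd\mu
=\langle (S\times Q)^h(F\otimes G),F\otimes G\rangle .
$$
The Ces\`aro average in $h$ of this correlation tends to $|\int F\otimes G|^2=0$. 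This follows from the mean ergodic theorem for the ergodic map $S\times Q$ on $X\times X$. The van der Corput lemma in Hilbert space then gives $B_N(F,G)\to0$ in $L^2$.

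For general bounded $F$, write $F=(F-\int F)+\int F$. The constant part gives $\int F\cdot\frac1N\sum G\circ Q^n$. This tends in $L^2$ to $\int F\cdot\E(G\mid\mathcal I_Q)$. Ergodicity of $S\times Q$ forces $Q$ to be ergodic, since a $Q$-invariant set $E$ yields the $S\times Q$-invariant set $X\times E$. Hence this term tends to $\int F\int G$. The pointwise limit from Theorem~\ref{thm:main} must coincide with the $L^2$ limit almost everywhere.

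\emph{Step 2: general exponents.} For $F\in L^p$ and $G\in L^q$, truncate to $F_M$ and $G_M$ as in the proof of Theorem~\ref{thm:main}. As shown there, the $\limsup$ of the pointwise difference $|B_N(F,G)-B_N(F_M,G_M)|$ is bounded by conditional-expectation terms. Ergodicity of $S$ and $Q$ makes these constants, $\|F-F_M\|_p\|G\|_q+\|F\|_p\|G-G_M\|_q$, and they tend to $0$. Together with $\int F_M\int G_M\to\int F\int G$, this identifies the pointwise limit. The endpoint cases with $q=\infty$ or $p=\infty$ are handled the same way, truncating only one function.

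For $L^r$ convergence with $1/r\ge1/p+1/q$, H\"older gives $\|B_N(F,G)\|_r\le\|F\|_p\|G\|_q$ uniformly in $N$. The bounded case converges in every $L^r$ by dominated convergence. An $\varepsilon/3$ argument with the truncations, using $\|B_N(F-F_M,G)\|_r\le\|F-F_M\|_p\|G\|_q$, yields $L^r$ convergence whenever $p,q<\infty$ or the function with infinite exponent is left untruncated. When $p=q=\infty$ the bounded case applies directly.

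\emph{Main obstacle.} The delicate point is the van der Corput step, namely passing from the fixed-$h$ correlation limit to the averaged one. The first limit uses ergodicity of $S^{-1}Q$. The average over $h$ must then be recognized as an ergodic average for $S\times Q$ applied to $F\otimes G$ against itself. This requires invertibility, so that $S^{-n}$ can be used to move the product onto the orbit of $S^{-1}Q$.
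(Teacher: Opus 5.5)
Your proposal is correct and follows essentially the paper's route: invertibility and ergodicity of $S^{-1}Q$ evaluate each fixed-lag correlation as $\int (F\circ S^h)\overline F\,\dd\mu\int (G\circ Q^h)\overline G\,\dd\mu$, ergodicity of $S\times Q$ then averages the lags, and truncation with H\"older handles the general exponents. The differences are cosmetic. You cite the Hilbert-space van der Corput lemma for mean-zero $F$ and treat the constant part through ergodicity of $Q$, whereas the paper runs the van der Corput estimate inline and combines $\limsup_N\|B_N(F,G)\|_{L^2}^2\le|c|^2$ with $\int B_N(F,G)\,\dd\mu\to c$, where $c=\int F\,\dd\mu\int G\,\dd\mu$; likewise you identify the limit for unbounded inputs directly from the pointwise truncation bound, while the paper goes through $L^r$ convergence.
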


\begin{proof}
The proof is divided into two steps.

\proofstep{1}{Reduce to correlations for bounded inputs.}
First assume that $F,G$ are bounded, and put
$$
 \begin{aligned}
 R&=S^{-1}Q,\qquad u_n=(F\circ S^n)(G\circ Q^n),\\
 c&=\left(\int F\,\dd\mu\right)\left(\int G\,\dd\mu\right).
 \end{aligned}
$$
Invariance gives
$$
 \int u_n\,\dd\mu=\int F\,(G\circ R^n)\,\dd\mu.
$$
Average this identity over $n$ and apply
Theorem~\ref{thm:ergodic-prelim} to $G$ under the ergodic transformation $R$.
It follows that
$\int B_N(F,G)\,\dd\mu\to c$.

Fix $H\ge1$. The boundary terms give
$$
 \begin{aligned}
 &\left\|B_N(F,G)-\frac1{NH}
       \sum_{n=1}^N\sum_{h=1}^H u_{n+h}\right\|_{L^2}\\
 &\qquad\le\frac{H+1}{N}\|F\|_{L^\infty}\|G\|_{L^\infty}.
 \end{aligned}
$$
Indeed, each shift by $h$ changes at most $2h$ boundary terms,
and we average these bounds over $1\le h\le H$.
Convexity of the squared Hilbert norm therefore gives
$$
 \limsup_{N\to\infty}\norm{B_N(F,G)}_{L^2}^2
 \le \lim_{N\to\infty}\frac1N\sum_{n=1}^N
               \norm{\frac1H\sum_{h=1}^H u_{n+h}}_{L^2}^2.
 \eqtag{eq:joint-averaging-bound}
$$
For $1\le h,k\le H$, a change of variables by $S^{n+k}$ gives
$$
\begin{aligned}
 \int u_{n+h}\overline{u_{n+k}}\,\dd\mu
 =\int &(F\circ S^{h-k})\overline F\\
       &\cdot\bigl((G\circ Q^{h-k})\overline G\bigr)
                         \circ R^{n+k}\,\dd\mu.
\end{aligned}
$$
By Theorem~\ref{thm:ergodic-prelim}, the Ces\`aro mean of these correlations in $n$ tends to
$$
 \left(\int(F\circ S^{h-k})\overline F\,\dd\mu\right)
 \left(\int(G\circ Q^{h-k})\overline G\,\dd\mu\right).
$$
\proofstep{2}{Identify the limit and extend the inputs.}
Expanding the squared norm in \eqref{eq:joint-averaging-bound}
and using these limits for each pair $h,k$ gives
$$
 \limsup_{N\to\infty}\norm{B_N(F,G)}_{L^2}^2
 \le
 \norm{\frac1H\sum_{h=1}^H
                 (F\otimes G)\circ(S\times Q)^h}_{L^2(\mu\times\mu)}^2.
 \eqtag{eq:joint-product-bound}
$$
The length $H$ is held fixed while $N$ tends to infinity. Thus
the first mean ergodic argument concerns only finitely many fixed
correlations. It requires no uniform estimate for a growing family
of functions. The second limit, in which $H$ tends to infinity,
is a single ordinary mean ergodic average on the product space.
Apply Theorem~\ref{thm:ergodic-prelim} to $S\times Q$ in \eqref{eq:joint-product-bound}. Its ergodicity makes the right-hand side tend to $|c|^2$
as $H\to\infty$. Together with convergence of the integrals, this
implies $\norm{B_N(F,G)-c}_{L^2}\to0$. The pointwise convergence
from Theorem~\ref{thm:main} then identifies the almost-everywhere limit
as $c$.

For general $F,G$, use the bounded truncations $F_m,G_m$ from the
proof of Theorem~\ref{thm:main}; an input with infinite exponent is left
unchanged. Set
$$
 c_m=\left(\int F_m\,\dd\mu\right)
     \left(\int G_m\,\dd\mu\right),
 \qquad
 c=\left(\int F\,\dd\mu\right)
   \left(\int G\,\dd\mu\right).
$$
The truncations converge in their respective norms and hence in
$L^1(\mu)$, so $c_m\to c$. Fix $1\le r<\infty$ with
$1/r\ge1/p+1/q$. H\"older's inequality and invariance give
$$
\begin{aligned}
 \sup_N\norm{B_N(F,G)-B_N(F_m,G_m)}_{L^r}
 &\le \norm{F-F_m}_{L^p}\norm{G}_{L^q}\\
 &\quad+\norm{F_m}_{L^p}\norm{G-G_m}_{L^q}
 \longrightarrow0.
\end{aligned}
$$
For each fixed $m$, the bounded case already proved gives
$B_N(F_m,G_m)\to c_m$ almost everywhere. Since
$|B_N(F_m,G_m)|\le\norm{F_m}_{L^\infty}\norm{G_m}_{L^\infty}$,
dominated convergence gives the same limit in $L^r(\mu)$.
Consequently,
$$
\begin{aligned}
 \limsup_{N\to\infty}\norm{B_N(F,G)-c}_{L^r}
 &\le \sup_N\norm{B_N(F,G)-B_N(F_m,G_m)}_{L^r}\\
 &\quad+|c_m-c|
 \longrightarrow0\qquad(m\to\infty).
\end{aligned}
$$
This proves the asserted norm convergence. By Theorem~\ref{thm:main}, the
original averages have an almost-everywhere limit, which must equal
$c$ because they converge to $c$ in $L^r(\mu)$.
\end{proof}

\begin{corollary}\label{cor:independent}
Let $m\ge1$, and let $(X_j,\mathcal B_j,\mu_j,R_j)$,
$1\le j\le m$, be invertible weakly mixing probability-preserving
systems. For each $j$, let $a_j,b_j$ be distinct nonzero integers
and let $F_j,G_j\in L^\infty(\mu_j)$. Then, for
$\mu_1\times\cdots\times\mu_m$-almost every $(x_1,\ldots,x_m)$,
$$
 \frac1N\sum_{n=1}^N\prod_{j=1}^m
       F_j(R_j^{a_jn}x_j)G_j(R_j^{b_jn}x_j)
 \longrightarrow
 \prod_{j=1}^m
       \left(\int F_j\,\dd\mu_j\right)
       \left(\int G_j\,\dd\mu_j\right).
$$
\end{corollary}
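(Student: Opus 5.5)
The plan is to apply Corollary~\ref{cor:joint} on the product space $X=X_1\times\cdots\times X_m$ with $\mu=\mu_1\times\cdots\times\mu_m$. Take
$$
 S=R_1^{a_1}\times\cdots\times R_m^{a_m},\qquad
 Q=R_1^{b_1}\times\cdots\times R_m^{b_m},
$$
and $F=F_1\otimes\cdots\otimes F_m$, $G=G_1\otimes\cdots\otimes G_m$. These maps are invertible, measure-preserving and commute, since in each coordinate they are powers of the same $R_j$. The bilinear average $B_N(F,G)$ at $(x_1,\ldots,x_m)$ is exactly the displayed average. Also $\int F\,\dd\mu=\prod_j\int F_j\,\dd\mu_j$, and similarly for $G$. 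The inputs are bounded, so the exponents $p=q=\infty$ are admissible. Once the two ergodicity hypotheses of Corollary~\ref{cor:joint} are verified, it gives almost-everywhere convergence to the stated product.

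First I check that $S^{-1}Q$ is ergodic. This map is $R_1^{b_1-a_1}\times\cdots\times R_m^{b_m-a_m}$. Each exponent $b_j-a_j$ is nonzero because $a_j\ne b_j$. Nonzero powers of invertible weakly mixing transformations are weakly mixing, as recalled in Section~\ref{sec:preliminaries}. Finite products of weakly mixing transformations are weakly mixing, so $S^{-1}Q$ is weakly mixing and in particular ergodic.

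Second I check that $S\times Q$ is ergodic on $X\times X$. After reordering coordinates, $S\times Q$ is isomorphic to the product of the $2m$ transformations $R_j^{a_j}$ and $R_j^{b_j}$ acting on $X_j\times X_j$, for $j=1,\ldots,m$. Each factor is a nonzero power of a weakly mixing invertible map, hence weakly mixing. The finite product is then weakly mixing and so ergodic. The coordinate permutation is a measure-preserving isomorphism, so ergodicity transfers to $S\times Q$.

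The only step needing care is the weak-mixing background: that nonzero powers and finite products of weakly mixing maps remain weakly mixing. The paper quotes this fact, so I would simply cite it, for instance via the spectral characterization that there are no nonconstant eigenfunctions. With both hypotheses in place, Corollary~\ref{cor:joint} finishes the proof. Here the nonzero speeds ensure each power is weakly mixing, and the distinctness $a_j\ne b_j$ ensures $S^{-1}Q$ is ergodic.
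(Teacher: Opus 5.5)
Your proposal is correct and follows the paper's proof essentially verbatim. It uses the same product transformations $S=\prod_j R_j^{a_j}$, $Q=\prod_j R_j^{b_j}$ with tensor inputs, the same weak-mixing facts to obtain ergodicity of $S^{-1}Q=\prod_j R_j^{b_j-a_j}$ and of $S\times Q$, and then Corollary~\ref{cor:joint}; your remark that $S\times Q$ is, after a coordinate permutation, a product of $2m$ weakly mixing maps just spells out a step the paper leaves implicit.
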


\begin{proof}
On the product space put
$$
 \begin{aligned}
 S&=\prod_{j=1}^mR_j^{a_j},&
 F&=\bigotimes_{j=1}^mF_j,\\
 Q&=\prod_{j=1}^mR_j^{b_j},&
 G&=\bigotimes_{j=1}^mG_j.
 \end{aligned}
$$
By \cite[Chapter~1]{Walters}, nonzero powers and finite products of
invertible weakly mixing transformations are weakly mixing. Each $R_j^{a_j}$ and $R_j^{b_j}$
is therefore weakly mixing, so $S\times Q$ is ergodic. Since
$a_j\ne b_j$ for every $j$, the map
$S^{-1}Q=\prod_{j=1}^mR_j^{b_j-a_j}$ is also weakly mixing.
Apply Corollary~\ref{cor:joint} to $S,Q,F,G$. Factoring the two integrals over the product measure proves the assertion.
\end{proof}
% \section*{Note on related work}
% After the first version of this preprint appeared on arXiv, we learned
% of the Lean formalization by Floris van Doorn, Polona Durcik,
% Joris Roos, Lenka Slav\'ikov\'a, and Christoph Thiele
% \cite{palomar-2026-10-18-000026-v1}.
% Their pointwise convergence theorem is exactly the same as
% Theorem~\ref{thm:main}.
% Roos informed us that he had this formalization on October~8, 2026,
% before we began our work.
% Both arguments use matrix energies and Gaussian estimates.
% The blueprint in that project passes through commuting real flows
% and a suspension construction. Here we transfer a finite-lattice
% estimate directly to the commuting transformations and recover the
% forward averages by $L^1(\R)$ approximation of their kernels.

\section*{Note on related work}
After the first version of this preprint appeared on arXiv, we learned
that the same pointwise convergence result as in
Theorem~\ref{thm:main} had already been proved in the Lean
formalization by Floris van Doorn, Polona Durcik, Joris Roos,
Lenka Slav\'ikov\'a, and Christoph Thiele
\cite{palomar-2026-10-18-000026-v1}.
Roos subsequently informed us that he had this formalization on
October~8, 2026, before we began our work.
We thank him for this clarification and for pointing us to
\cite{NormVariationCommuting}.

Both arguments use matrix energies and Gaussian estimates.
The blueprint in that project passes through commuting real flows
and a suspension construction. Here we transfer a finite-lattice
estimate directly to the commuting transformations and recover the
forward averages by $L^1(\R)$ approximation of their kernels.

\smallskip
\noindent\emph{Remark.}
In their August 2026 preprint \cite{NormVariationCommuting}, the same
authors prove norm-variation estimates for multiple ergodic averages
associated with $n\ge2$ commuting measure-preserving transformations.
These estimates give a quantitative strengthening of Tao's
norm-convergence theorem.

\section*{Declaration of AI use}
The authors used ChatGPT (GPT-6) during the development of this work.
The authors identified a connection between the triangular structure
arising from commuting transformations and the matrix-energy and
variation methods developed in \cite{OAIAnnular}.
Subsequent discussions with ChatGPT helped explore how these methods
could be adapted to the discrete estimates needed for pointwise
ergodic averages.
These interactions also contributed to the choice of the even
Gaussian kernel $K$, the development of the proof strategy, and
the derivation of technical details in
Lemma~\ref{lem:kernel-recovery}.
ChatGPT was also used to polish the text.
The authors take full responsibility for all mathematical arguments,
references, and conclusions presented in this paper.

\end{document}